\newtheorem{theorem}{Theorem}
\newtheorem{lemma}[theorem]{Lemma}
\newcommand{\citep}{\cite}
\DeclarePairedDelimiter\floor{\lfloor}{\rfloor}
\def\mbf{\mathbf}
\def\mbs{\boldsymbol}
\def\mc{\mathcal}
\def\spec{\sigma_1}
\def\lmax{\lambda_{\mathrm{max}}}
\DeclareMathOperator*{\argmin}{argmin}
\DeclareMathOperator{\Tr}{Tr}
\newcommand{\eqdef}{\stackrel{\text{def}}{=}}
\newcommand{\R}{\mathbb{R}}                      %
\newcommand{\Prob}{\mathbb{P}}                   %
\newcommand{\E}{\mathbb{E}}                      %
\newcommand{\Y}{\mathbf{Y}}
\newcommand{\calF}{\mathcal{F}}
\newcommand{\x}{ {\bf x}}
\newcommand{\X}{ {\bf X}}
\newcommand{\Z}{ {\bf Z}}
\newcommand{\Q}{ {\bf Q}}
\newcommand{\w}{  {\bf w}}
\newcommand{\removed}[1]{}
\newcommand{\norm}[1]{\left\| #1 \right\|}
\newcommand{\trans}{{\top}}
\newcommand{\bP}{\mathbf{P}}
\newcommand{\g}{\mathbf{g}}
\newcommand{\sgi}[2]{\beta_{#1,#2}}
\newcommand{\sgt}[1]{\gamma_{#1}}
\newcommand{\SGI}[1]{\mbs{\beta}_t}
\newcommand{\SGT}{\mbs{\gamma}}
\newcommand{\dists}{\zeta}
\newcommand{\specnorm}{\rho}
\newcommand{\commfrac}{\nu}
\newcommand{\reg}{\mu}
\newcommand{\step}{\eta}
\newcommand{\rcv}{\texttt{RCV1}}
\newcommand{\ctype}{\texttt{Covertype}}
\begin{document}

\title{Data Dependent Convergence For Consensus Stochastic Optimization}

\author{Avleen S.~Bijral,
        Anand D.~Sarwate,
        ~Nathan~Srebro%
}

\maketitle

\begin{abstract}
We study a distributed consensus-based stochastic gradient descent (SGD) algorithm and show that the rate of convergence involves the spectral properties of two matrices: the standard spectral gap of a weight matrix from the network topology and a new term depending on the spectral norm of the sample covariance matrix of the data. This data-dependent convergence rate shows that distributed SGD algorithms perform better on datasets with small spectral norm. Our analysis method also allows us to find data-dependent convergence rates as we limit the amount of communication. Spreading a fixed amount of data across more nodes slows convergence; for asymptotically growing data sets we show that adding more machines can help when minimizing twice-differentiable losses.
\end{abstract}

\section{Introduction}
Decentralized optimization algorithms for statistical computation and machine learning on large data sets try to trade off efficiency (in terms of estimation error) and speed (from parallelization).
From an empirical perspective, it is often unclear when these methods will work for a particular data set, and to what degree additional communication can improve performance. For example, in high-dimensional problems communication can be costly. We would therefore like to know when limiting communication is feasible or beneficial. The theoretical analysis of distributed optimization methods has focused on  providing strong \textit{data-independent} convergence rates under 
analytic assumptions on the objective function such as convexity and smoothness. 
In this paper we show how the tradeoff between efficiency and speed is  affected by the data distribution itself. We study a class of distributed optimization algorithms and prove an upper bound on the error that depends on the spectral norm of the data covariance. By tuning the frequency with which nodes communicate, we obtain a bound that depends on data distribution, network size and topology, and amount of communication. This allows us to interpolate between regimes where communication is cheap (e.g. shared memory systems) and those where it is not (clusters and sensor networks).

We study the problem of minimizing a regularized convex function~\citep{RakhShamir:12arxiv} of the form
\begin{align}
 J(\w) &=  \sum_{i=1}^N\frac{\ell(\w^{\trans}\x_i ; y_i)}{N} + \frac{\reg}{2} \norm{ \w }^2  \label{eq:optForm} \\
       &= \E_{\x \sim \hat{\mc{P}}}\left[\ell(\w^{\trans}\x ; y)\right] + \frac{\reg}{2} \norm{ \w }^2 \notag
 \end{align}
where $\ell(\cdot)$ is convex and Lipschitz and the expectation is with respect to the empirical distribution $\hat{\mc{P}}$ corresponding to a given data set with $N$ total data points $\{ (\mbf{x}_i, y_i) \}$. We will assume $\x_i \in \mathbb{R}^d$ and $y_i \in \mathbb{R}$. This \textit{regularized empirical risk minimization} formulation encompasses algorithms such as support vector machine classification, ridge regression, logistic regression, and others~\cite{UMLbook}. For example $\x$ could represent $d$ pixels in a grayscale image and $y$ a binary label indicating whether the image is of a face: $\w^{\trans}\x$ gives a confidence value about whether the image is of a face or not.
We would like to solve such problems using a network of $m$ processors connected via a network (represented by a graph indicating which nodes can communicate with each other). The system would distribute these $N$ points across the $m$ nodes, inducing local objective functions $J_j(\w)$ approximating \eqref{eq:optForm}. 

In such a computational model, nodes can perform local computations and send messages to each other to jointly minimize \eqref{eq:optForm}. The strategy we analyze is what is referred to as distributed primal averaging~\citep{nedicDistributedOptimization}: each node in the network processes points sequentially, performing a SGD update locally and averaging the current iterate values of their neighbors after each gradient step. This can also be thought of as a distributed consensus-based version of Pegasos \citep{SSSC11:pegasos} when the loss function is the hinge loss. %
We consider a general topology with $m$ nodes attempting to minimize a global objective function $J(\w)$ that decomposes into a sum of $m$ local objectives: $J(\w) = \sum_{i=1}^{m} J_i(\w)$.  This is a model for optimization in systems such as data centers, distributed control systems, and sensor networks. %

\textbf{Main Results.} Our goal in this paper is to characterize how the spectral norm $\specnorm^2 = \spec( \E_{\hat{\mc{P}}}[ \x \x^{\trans} ])$ of the sample covariance affects the rate of convergence of stochastic consensus schemes under different communication requirements. Elucidating this dependence can help guide empirical practice by providing insight into when these methods will work well. We prove an upper bound on the suboptimality gap for distributed primal averaging that depends on $\specnorm^2$ as well as the mixing time of the weight matrix associated to the algorithm. 
 Our result shows that networks of size $m<\frac{1}{\specnorm^2}$ gain from parallelization. To understand the communication-limited regime, we extend our analysis to intermittent communication. In a setting with finite data and sparse connectivity, convergence will deteriorate with increasing $m$ because we split the data to more machines that are farther apart. We also show that by using a mini-batching strategy we can offset the penalty of infrequent communication by communicating after a mini-batch (sub)gradient step. Finally, in an asymptotic regime with infinite data at every node we show using results of Bianchi et al.~\cite{BianchiFortHachem:13IEEETrans} that for twice-differentiable loss functions this network effect disappears and that we gain from additional parallelization.

\textbf{Related Work.} Several authors have proposed distributed algorithms  involving nodes computing local gradient steps and averaging iterates, gradients, or other functions of their neighbors~\citep{nedicDistributedOptimization,dualAveraging,distrStochSubgrOpt}. By alternating local updates and consensus with neighbors, estimates at the nodes converge to the optimizer of $J(\cdot)$. 
In these works no assumption is made on the local objective functions and they can be arbitrary. Consequently the convergence guarantees do not reflect the setting when the data is homogenous (for e.g. when data has the same distribution), specifically error increases as we add more machines. This is counterintuitive, especially in the large scale regime, since this suggests that despite homogeneity the methods perform worse than the centralized setting (all data on one node). 

We provide a first data-dependent analysis of a consensus based stochastic gradient method in the homogenous setting  and demonstrate that there exist regimes %
where we benefit from having more machines in any network. \removed{To mitigate the effect of limited communication, we propose and analyze a mini-batched extension to reduce communication costs. We interpret this as an intermediate regime between full communication and one-shot communication~\citep{ZhangDW:12,ShamirSrebroZhang:14icml}. Finally, we show that for twice-differentiable losses, having more machines always helps (via a variance reduction) in the infinite data regime, using results of Bianchi et al.~\cite{BianchiFortHachem:13IEEETrans}.}

In contrast to our stochastic gradient based results, data dependence via the Hessian of the objective has also been demonstrated in parallel coordinate descent based approaches of Liu et al.~\cite{LiuWrightReBittSri} and the Shotgun algorithm of Bradley et al.~\cite{BradleyKyrola}. The assumptions differ from us in that the objective function is assumed to be smooth~\cite{LiuWrightReBittSri} or $\mathcal{L}_1$ regularized~\cite{BradleyKyrola}. Most importantly, our results hold for arbitrary networks of compute nodes, while the coordinate descent based results hold only for networks where all nodes communicate with a central aggregator (sometimes referred to as a master-slave architecture, or a star network), which can be used to model shared-memory systems.
Another interesting line of work is the impact of delay on convergence in distributed optimization~\cite{AgarwalD:11nips}. These results show that delays in the gradient computation for a star network are asymptotically negligible when optimizing smooth loss functions. We study general network topologies but with intermittent, rather than delayed communication. Our result suggest that certain datasets are more tolerant of skipped communication rounds, based on the spectral norm of their covariance.

We take an approach similar to that of Tak\'{a}\v{c} et al.~\citep{TakacBRS:13icml} who developed a spectral-norm based analysis of mini-batching for non-smooth functions. We decompose the iterate in terms of the data points encountered in the sample path~\citep{CotterSSS:11nips}. This differs from analysis based on smoothness considerations alone~\citep{CotterSSS:11nips,AgarwalD:11nips,DekelGSX:12mini,ShamirSrebroZhang:14icml} and gives practical insight into how communication (full or intermittent) impacts the performance of these algorithms. Note that our work is fundamentally different in that these other works either assume a centralized setting \citep{CotterSSS:11nips,DekelGSX:12mini,ShamirSrebroZhang:14icml} or implicitly assume a specific network topology (e.g. \citep{ZhangDW:12} uses a star topology). For the main results we only assume strong convexity while the existing guarantees for the cited methods depend on a variety of regularity and smoothness conditions.

 \textbf{Limitation.} In the stochastic convex optimization (see for e.g. \cite{StochConOpt}) setting the quantity of interest is the population objective corresponding to problem \ref{eq:optForm}. When minimizing this population objective our results suggest that adding more machines worsens convergence (See Theorem \ref{theorem:mainThrm}). For finite data our convergence results satisfy the intuition that adding more nodes in an arbitrary network will hurt convergence. The finite homogenous setting is most relevant in settings such as data centers, where the processors hold data which essentially looks the same. 
In the infinite or large scale data setting, common in machine learning applications, this is  counterintuitive since when each node has infinite data, any distributed scheme including one on arbitrary networks shouldn't perform worse than the centralized scheme (all data on one node). Thus our analysis is limited in that it doesn't unify the stochastic optimization and the consensus setting in a completely satisfactory manner. To partially remedy this we explore consensus SGD for smooth strongly convex objectives in the asymptotic regime and show that one can gain from adding more machines in any network.

In this paper we focus on a simple and well-studied protocol~\cite{nedicDistributedOptimization}. However, our analysis approach and insights may yield data-dependent bounds for other more complex algorithms such as distributed dual averaging~\cite{dualAveraging}. 
More sophisticated gradient averaging schemes such as that of Mokhtari and Ribeiro~\cite{MokhtariR:16jmlr} can exploit dependence across iterations~\cite{ShiLWY:15extra,SchmidLF:2015} to improve the convergence rate; analyzing the impact of the data distribution is considerably more complex in these algorithms.

We believe that our results provide a first step towards understanding data-dependent bounds for distributed stochastic optimization in settings common to machine learning. Our analysis coincides with phenomenon seen in practice: for data sets with small $\specnorm$, distributing the computation across many machines is beneficial, but for data with larger $\specnorm$ more machines is not necessarily better. \removed{We provide upper bounds on the gap between the iterates and the optimal solution: these bounds do not immediately yield parameters for practical use, but}Our work suggests that taking into account the data dependence can improve the empirical performance of these methods.

\section{Model}
We will use boldface for vectors.  Let $[k] = \{1,2,\ldots, k\}$.  Unless otherwise specified, the norm $\norm{\cdot}$ is the standard Euclidean norm. The spectral norm of a matrix $A$ is defined to be the largest singular value $\spec(A)$ of the matrix $A$ or equivalently the square root of the largest eigenvalue of $A^{\top} A$. For a graph $\mc{G} = (\mc{V}, \mc{E})$ with vertex set $\mc{V}$ and edge set $\mc{E}$, we will denote the neighbors of a vertex $i \in \mc{V}$ by $\mc{N}(i) \subseteq \mc{V}$.

\textbf{Data model.} Let $\mc{P}$ be a distribution on $\mathbb{R}^{d+1}$ such that 
for $(\x,y) \sim \mc{P}$, we have $\norm{\x} \le 1$ almost surely.  Let $S = \{\x_1, \x_2, \ldots, \x_N\}$ be i.i.d sample of $d$-dimensional vectors from $\mc{P}$ and let $\hat{\mc{P}}$ be the empirical distribution of $S$. Let $\mbs{\hat{\Sigma}} = \E_{\x \sim \hat{\mc{P}}}[ \x \x^{\trans} ]$ be the sample second-moment matrix of $S$. Our goal is to express the performance of our algorithms in terms of $\specnorm^2 = \spec(\mbs{\hat{\Sigma}})$, the spectral norm of $\mbs{\hat{\Sigma}}$. The spectral norm $\specnorm^2$ can vary significantly across different data sets. For example, for sparse data sets $\specnorm^2$ is often small. This can also happen if the data happens to lie in low-dimensional subspace (smaller than the ambient dimension $d$).

\textbf{Problem.}  Our problem is to minimize a particular instance of \eqref{eq:optForm} where the expectation is over a finite collection of data points:
	\begin{align}
	\w^* \eqdef \argmin_{\w} J(\w)
	\label{eq:globalopt}
	\end{align}
\removed{We consider a model in which $m$ individual nodes\removed{ or processors} operate in a discrete-time fashion to compute $\w^*$. In each time-slot (or iteration) they can perform local computations and pass messages for a fixed number $T$ of iterations.} Let $\hat{\w}_j(t)$ be the estimate of $\w^*$ at node $j \in [m]$ in the $t$-th iteration. We bound the expected gap (over the data distribution) at iteration $T$ between $J(\w^*)$ and the value $J(\hat{\w}_i(T))$ of the global objective $J(\hat{\w}_j(T))$ at the output $\hat{\w}_j(T)$ of each node $j$ in our distributed network. \removed{It would also be interesting to study the gap between the iterates and the minimizer of the population objective in \eqref{eq:optForm}; we defer this for future work.}
We will denote the subgradient set of $J(\w)$ by $\partial J(\w)$ and a subgradient of $J(\w)$ by $\nabla J(\w) \in \partial J(\w)$.

In our analysis we will make the following assumptions about the individual functions $\ell(\w^{\trans}\x)$: (a) The loss functions $\{ \ell(\cdot) \}$ are convex, and (b) The loss functions $\{ \ell(\cdot; y) \}$ are $L$-Lipschitz for some $L > 0$ and all $y$. Note that $J(\w)$ is $\reg$-strongly convex due to the $\ell_2$-regularization. Our analysis will not depend on the the response $y$ except through the Lipschitz bound $L$ so we will omit the explicit dependence on $y$ to simplify the notation in the future.

\textbf{Network Model.} We consider a model in which minimization in \eqref{eq:globalopt} must be carried out by $m$ \removed{computational devices, or }nodes.  These nodes are arranged in a network whose topology is given by a graph $\mc{G}$ -- an edge $(i,j)$ in the graph means nodes $i$ and $j$ can communicate.  A matrix $\bP$ is called graph conformant if $P_{ij} > 0$ only if the edge $(i,j)$ is in the graph.  We will consider algorithms which use a doubly stochastic and graph conformant sequence of matrices $\bP(t)$.

\textbf{Sampling Model.} We assume the $N$ data points are divided evenly and uniformly at random among the $m$ nodes, and define $n \eqdef N/m$ to be the number of points at each node. This is a necessary assumption since our bounds are data dependent and depend on subsampling bounds of spectral norm of certain random submatrices. However our data independent bound holds for arbitrary splits.   Let $S_i$ be the subset of $n$ points at node $i$.  The local stochastic gradient procedure consists of each node $i \in [m]$ sampling from $S_i$ with replacement.  This is an approximation to the local objective function
\begin{align}
J_i(\w) = \sum_{j \in S_i} \frac{\ell(\w^{\trans} \x_{i,j})}{n} + \frac{\reg}{2} \norm{\w}^2.
\end{align}

\textbf{Algorithm.} In the subsequent sections we analyze the distributed version (Algorithm \ref{alg:DiSCO}) of standard SGD. This algorithm is not new~\citep{nedicDistributedOptimization,distrStochSubgrOpt} and has been analyzed extensively in the literature. The step-size $\eta_t=1/(\mu t)$ is commonly used for large scale strongly convex machine learning problems like SVMs (e.g.-\cite{SSSC11:pegasos}) and ridge regression: to avoid an extra parameter in the bounds, we take this setting. \removed{More discussion on the optimality of the step-size for general strongly convex functions can be found in \cite{RakhShamir:12}.}
In Algorithm \ref{alg:DiSCO} node $i$ samples a point uniformly with replacement from a local pool of $n$ points and then updates its iterate by computing a weighted sum with its neighbors followed by a local subgradient step. The selection is uniform to guarantee that the subgradient is an unbiased estimate of a true subgradient of the local objective $J_i(\w)$, and greatly simplifies the analysis. Different choices of $\bP(t)$ will allow us to understand the effect of limiting communication in this distributed optimization algorithm. 

\begin{algorithm}[!htb]
   \caption{Consensus Strongly Convex Optimization}
   \label{alg:DiSCO}
\begin{algorithmic}
   \STATE {\bfseries Input:} $\{\x_{i,j}\}$,where $i\in [m]$ and $j \in [n]$ and $N=mn$, matrix sequence $\bP(t)$, $\reg> 0$, $T \geq 1$
   \STATE
   \STATE \COMMENT{Each $i \in [m]$ executes}
   \STATE {\bfseries Initialize:} set $\w_i(1) = {\bf 0} \in \R^d$.
   \FOR{$t=1$ {\bfseries to} $T$}
   \STATE Sample $\x_{i,t}$ uniformly with replacement from $S_i$.
   \STATE Compute $\g_i(t) \in  \partial\ell(\w_i(t)^{\trans}\x_{i,t})\x_{i,t} + \reg \w_i(t)$
   \STATE $\w_i(t+1) = \sum_{j=1}^m \w_j(t) P_{ij}(t) - \step_t \g_i(t)\label{eq:updRule}$
   \ENDFOR
    \STATE {\bfseries Output:} 
    $\hat{\w}_i(T)=\frac{1}{T}\sum_{t=1}^{T} \w_i(t)\label{eq:avgPred}$ for any $i \in [m]$.
\end{algorithmic}
\end{algorithm}

\textbf{Expectations and probabilities.} There are two sources of stochasticity in our model: the first in the split of data points to the individual nodes, and the second in sampling the points during the gradient descent procedure. We assume that the split is done uniformly at random, which implies that the expected covariance matrix at each node is the same as the population covariance matrix $\mbs{\hat{\Sigma}}$. Conditioned on the split, we assume that the sampling at each node is uniformly at random from the data point at that node, which makes the stochastic subgradient an unbiased estimate of the subgradient of the local objective function. Let  $\mc{F}_t$ be the sigma algebra generated by the random point selections of the algorithm up to time $t$, so that the iterates $\{\w_i(t) : i \in [m]\}$ are measurable with respect to $\mc{F}_t$.

\section{Convergence and Implications~\label{section:CommEveryIteration}}

Methods like Algorithm \ref{alg:DiSCO}, also referred to as primal averaging, have been analyzed previously~\citep{nedicDistributedOptimization,distrStochSubgrOpt,DistStronglyConvex}. In these works it is shown that the convergence properties depend on the structure of the underlying network via the second largest eigenvalue of $\bP$.  We consider in this section the case when $\bP(t)=\bP$ for all $t$ where $\bP$ is a fixed Markov matrix.  This corresponds to a synchronous setting where communication occurs at every iteration.

We analyze the use of the step-size $\step_t=1/(\reg t)$ in Algorithm \ref{alg:DiSCO} and show that the convergence depends on the spectral norm $\specnorm^2=\sigma_1(\mbf{\hat{\Sigma}})$ of the sample covariance matrix.

\begin{theorem}
Fix a Markov matrix $\bP$ and let $\specnorm^2 =\sigma_1(\mbf{\hat{\Sigma}})$ denote the spectral norm of the covariance matrix of the data distribution. Consider Algorithm \ref{alg:DiSCO} when the objective $J(\w)$ is strongly convex, $\bP(t) = \bP$ for all $t$, and $\step_t=1/(\reg t)$.  Let $\lambda_2(\bP)$ denote the second largest eigenvalue of $\bP$.  Then if the number of samples on each machine $n$ satisfies 
	\begin{align}
	n > \frac{4}{3 \rho^2} \log \left( d\right)
	\end{align}
and the number of iterations $T$ satisfies
	\begin{align}
	T &> 2 e  \log(1/\sqrt{ \lambda_2(\bP)}) \\
	\frac{T}{\log(T)} &> \max\left(\frac{4}{3 \rho^2} \log \left( d\right), \frac{ \left( \frac{8}{5} \right)^{\frac{1}{4}} \sqrt{ m/\specnorm }}{\log(1/\lambda_2(\bP))}\right), \label{eq:thm_Tbound}
	\end{align}
then the expected error for each node $i$ satisfies
\begin{align}
&\E\left[ J(\hat{\w}_i(T)) -J(\w^{*})  \right] \ \le \ \notag \\
&\left(\frac{1}{m} + \frac{100\sqrt{m\specnorm^2}\cdot \log T}{1-\sqrt{\lambda_2(\bP)}} \right)\cdot \frac{L^2}{\reg} \cdot \frac{\log T}{T}.
\label{eq:morecomm:totalerr}
\end{align}
\label{theorem:mainThrm}
\end{theorem}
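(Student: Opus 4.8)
The plan is to track the network-averaged iterate $\bar\w(t) = \frac{1}{m}\sum_{j=1}^m \w_j(t)$ and separately control how far each $\w_i(t)$ drifts from it. Since $\bP$ is doubly stochastic, averaging the update rule over $i$ annihilates the consensus mixing and gives $\bar\w(t+1) = \bar\w(t) - \eta_t \bar\g(t)$ with $\bar\g(t) = \frac{1}{m}\sum_i \g_i(t)$, so $\bar\w(t)$ performs an inexact SGD on $J$: conditioned on $\mathcal{F}_t$ we have $\E[\bar\g(t)] = \frac{1}{m}\sum_i \nabla J_i(\w_i(t))$, which is a subgradient of $J$ at $\bar\w(t)$ up to the drift terms $\nabla J_i(\w_i(t)) - \nabla J_i(\bar\w(t))$. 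First I would run the standard $\mu$-strongly-convex SGD argument with $\eta_t = 1/(\mu t)$: expand $\norm{\bar\w(t+1)-\w^*}^2$, take conditional expectations, use strong convexity of $J$ to telescope, and pass to the ergodic average. Because the output is $\hat\w_i(T) = \frac{1}{T}\sum_t \w_i(t)$ rather than $\frac{1}{T}\sum_t \bar\w(t)$, one more use of convexity of $J$ converts $J(\hat\w_i(T)) - J(\w^*)$ into $\frac{1}{T}\sum_t\big(J(\bar\w(t)) - J(\w^*)\big)$ plus drift terms controlled by $\norm{\nabla J(\w_i(t))}\,\norm{\w_i(t)-\bar\w(t)}$. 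The net result is a bound of the schematic form $\frac{1}{T}\sum_t\big[\text{telescoping strong-convexity terms} + \eta_t\,\E\norm{\bar\g(t) - \E\bar\g(t)}^2 + (\text{network error in }\norm{\w_i(t)-\bar\w(t)})\big]$.

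The variance term is where the $1/m$ gain appears: conditioned on $\mathcal{F}_t$ the samples at the $m$ nodes are independent, so $\E\norm{\bar\g(t) - \E\bar\g(t)}^2 = \frac{1}{m^2}\sum_i \E\norm{\g_i(t) - \nabla J_i(\w_i(t))}^2 \le \frac{L^2}{m}$ (using $\norm{\x}\le 1$ and that the regularization part is deterministic); summed against $\eta_t = 1/(\mu t)$ this contributes the $\frac{1}{m}\cdot\frac{L^2}{\mu}\cdot\frac{\log T}{T}$ piece. For the network error I would bound the disagreement by unrolling the coupled recursion: stacking the iterates and the gradients, the disagreement at time $t$ equals $-\sum_{s<t}\eta_s\big(\bP^{t-1-s} - \frac{1}{m}\mathbf{1}\mathbf{1}^\top\big)$ applied to the stacked gradient $\g(s)$, whose block-Euclidean norm is at most $\lambda_2(\bP)^{t-1-s}$ times $\norm{\g(s)}$, and $\norm{\g(s)} \le \sqrt{m}\,\max_j\norm{\g_j(s)}$ produces the $\sqrt{m}$. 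Summing the geometric factors against $\eta_s = 1/(\mu s)$, with the conditions $T > 2e\log(1/\sqrt{\lambda_2(\bP)})$ and \eqref{eq:thm_Tbound} used to control the tails and $\log$ residuals, gives a disagreement bound proportional to $\frac{\log t}{\mu t}\cdot\frac{\sqrt{m}}{1-\sqrt{\lambda_2(\bP)}}$ times a gradient magnitude.

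The heart of the argument, and the place $\specnorm^2$ enters, is replacing the crude bound $\max_j\norm{\g_j(s)} \le L + \mu\norm{\w_j(s)}$ by a data-dependent one. With $\eta_t = 1/(\mu t)$ the local recursion telescopes (as in the Pegasos analysis), so each $\w_j(t)$ is an explicit linear combination $-\frac{1}{\mu t}\sum_{s\le t} c_s\,\x_{j,s}$ of the points it has sampled, mixed across nodes by powers of $\bP$; consequently the quantities we must bound (iterate norms, and the weighted accumulated disagreement sums) are quadratic forms $c^\top X^\top X\,c$ where $X$ stacks sampled data points, governed by $\sigma_1$ of the corresponding empirical second-moment matrices. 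Each node samples $T$ times with replacement from a uniformly random subset of $n = N/m$ of the $N$ points, so two layers of matrix concentration apply: a subsampling bound needing $n > \frac{4}{3\rho^2}\log d$ to keep the local second-moment matrix $\hat{\Sigma}_i$ within a constant factor of $\hat{\Sigma}$, and a with-replacement bound needing $\frac{T}{\log T} > \frac{4}{3\rho^2}\log d$ to keep the empirical covariance of the $T$ drawn points within a constant factor of $\hat{\Sigma}_i$; together $\sigma_1(\frac{1}{T}X^\top X) = O(\specnorm^2)$ with high probability. On the low-probability complementary event all quantities are still crudely bounded (iterates by $O(L/\mu)$, losses $L$-Lipschitz), so its contribution to $\E[J(\hat\w_i(T)) - J(\w^*)]$ is negligible; this is how the high-probability spectral bounds become a bound in expectation. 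Carrying $\sqrt{\specnorm^2} = \specnorm$ through the disagreement estimate and pairing it with the crude $L$ on the other gradient factor in the drift terms yields the $\frac{\sqrt{m\specnorm^2}\,\log T}{1-\sqrt{\lambda_2(\bP)}}$ coefficient, with the conditions on $T$ exactly what is needed to absorb the remaining geometric-tail and logarithmic residuals into the absolute constant.

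I expect the main obstacle to be this data-dependent step: making the "iterate $=$ linear combination of sampled data" representation survive the consensus mixing (the averaging step entangles data drawn at different nodes, so the effective design matrix is a $\bP$-weighted combination rather than one node's samples), and then simultaneously (i) bounding the resulting weighted quadratic forms by the right submatrix spectral norms, (ii) getting the $m$-dependence to come out as $\sqrt{m}$ rather than $m$, and (iii) cleanly converting the high-probability concentration statements into an expectation bound. The strongly-convex SGD recursion and the geometric mixing estimate are, by contrast, essentially bookkeeping along standard lines.
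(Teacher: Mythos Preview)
Your overall architecture matches the paper's: track $\bar{\w}(t)$, run the strongly-convex recursion with $\step_t=1/(\reg t)$ to telescope, control the drift $\norm{\w_i(t)-\bar{\w}(t)}$ by unrolling against geometric mixing of $\bP$, and pass to the ergodic average. The decomposition into a variance piece contributing $L^2/m$ and a network-error piece is also correct, though note that the squared-bias part $\norm{\frac{1}{m}\sum_i\nabla J_i(\w_i(t))}^2$ of $\E\norm{\bar{\g}(t)}^2$ does not vanish and must itself be bounded by $O(L^2\specnorm^2)$ using the gradient-norm estimates below; the paper handles this by bounding the full second moment $\E\norm{\bar{\g}(t)}^2$ directly and splitting into diagonal ($L^2/m$) and off-diagonal ($\le 30L^2\specnorm^2$) parts.

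Where your plan diverges from the paper is in \emph{where} $\specnorm$ enters, and this is a genuine gap. You propose to inject $\specnorm$ into the disagreement bound by replacing $\norm{\g_j(s)}\le L$ with a data-dependent estimate, viewing the accumulated disagreement as a quadratic form in the sampled points, and then pairing that with the crude $L$ on the gradient factor. The paper does the opposite: it bounds the disagreement with the \emph{crude} gradient bound (Lemma~\ref{lemma:avgdevBnd}, giving $\norm{\bar{\w}(t)-\w_i(t)}\le \frac{2L\sqrt{m}}{\reg b}\cdot\frac{\log(2bet^2)}{t}$ with no $\specnorm$), and instead extracts $\specnorm$ from the \emph{full} subgradients $\nabla J_i(\cdot)$ that multiply the disagreement in the drift term. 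The point is that $\nabla J_i(\w)=\frac{1}{n}\sum_{j\in S_i}\beta_j\x_j+\reg\w$ averages the loss part over $n$ points, so $\norm{\nabla J_i(\w)}^2\le 2L^2\spec(\Q_{S_i})/n+2\reg^2\norm{\w}^2$; the Pegasos unrolling you mention then bounds $\norm{\bar{\w}(t)}^2$ by $\frac{L^2}{\reg^2}\cdot\spec(\Q_{i,t-1})/(t-1)$. In both places it is the ratio $\spec(\Q_K)/K$ that appears, and Lemma~\ref{lem:specnormIntdim} controls this by $5\specnorm^2$ \emph{in expectation} once $K>\frac{4}{3\specnorm^2}\log d$. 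Your route is problematic because a \emph{single} stochastic gradient has loss part $\beta\x$ with $\norm{\beta\x}$ as large as $L$, not $L\specnorm$; only averaging over many points brings in the spectral norm. If you instead try the quadratic-form bound $c^\top\Q c\le\norm{c}_2^2\,\spec(\Q)$ on the accumulated disagreement, the relevant Gram matrix involves $O(mt)$ points and $\spec(\Q)$ is of order $mt\specnorm^2$, which costs a factor of $t$ and degrades the rate. Finally, your anticipated obstacle of converting high-probability concentration to an expectation does not arise in the paper's route: the matrix-Bernstein tail is integrated directly to give $\E[\spec(\Q_K)/K]\le 5\specnorm^2$, so no crude-complement argument is needed.
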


\noindent \textit{Remark 1:} Theorem \ref{theorem:mainThrm} indicates that the number of machines should be chosen as a function of $\specnorm$. We can identify three sub-cases of interest:

\textbf{Case (a):} \textit{$m\le \frac{1}{\specnorm^{2/3}}$}: In this regime since $1/m > \sqrt{m\specnorm^2}$ (ignoring the constants and the $\log T$ term) we always benefit from adding more machines.

\textbf{Case (b):} \textit{$\frac{1}{\specnorm^{2/3}} < m\le \frac{1}{\specnorm^2}$}: 
The result tells us that there is no degradation in the error and the bound improves by a factor $\sqrt{m}\specnorm$. Sparse data sets generally have a smaller value of $\specnorm^2$ (as seen in Tak\'{a}\v{c} et al.~\cite{TakacBRS:13icml}); Theorem \ref{theorem:mainThrm} suggests that for such data sets we can use a larger number of machines without losing performance. However the requirements on the number of iterations also increases. This provides additional perspective on the observation by Tak\'{a}\v{c} et al~\cite{TakacBRS:13icml} that sparse datasets are more amenable to parallelization via mini-batching. The same holds for our type of parallelization as well.

\textbf{Case (c):} \textit{$m > \frac{1}{\specnorm^2}$}: 
In this case we pay a penalty $\sqrt{m\specnorm^2} \ge 1$ suggesting that for datasets with large $\specnorm$ we should expect to lose performance even with relatively fewer machines.

Note that $m>1$ is implicit in the condition $T > 2 e  \log(1/\sqrt{ \lambda_2)}) $ since $\lambda_2 =0$ for $m=1$. This excludes the single node Pegasos~\citep{TakacBRS:13icml} case. Additionally in the case of general strongly convex losses (not necessarily dependent on $\w^{\trans}\x$) we can obtain a convergence rate of $\mc{O}(\log^2(T)/T)$. We do not provide the proof here.

\removed{\noindent \textit{Remark 2:} The lower bound on the number of iterations \ref{eq:thm_Tbound} can be considerably improved by instead looking at the intrinsic dimension of the data since for several real datasets the intrinsic dimension can be much smaller than the dimension of the ambient space. However this requires us to assume a lower bound on the norm of the data samples, which is a less natural assumption.}

\section{Stochastic Communication~\label{section:StochComm}}

In this section we generalize our analysis in Theorem \ref{theorem:mainThrm} to handle time-varying and stochastic communication matrices $\bP(t)$.   In particular, we study the case where the matrices are chosen i.i.d.~over time.  %
Any strategy that doesn't involve communicating at every step will incur a larger gap between the local node estimates and their average. We call this the network error.  Our goal is to show how knowing $\specnorm^2$ can help us balance the network error and optimality gap.

First we bound the network error for the case of stochastic time varying communication matrices $P(t)$ and then a simple extension leads to a generalized version of Theorem \ref{theorem:mainThrm}.

\begin{lemma}\label{lemma:avgdevBndStoch}
Let $\{\bP(t)\}$ be a i.i.d sequence of doubly stochastic Markov matrices and consider Algorithm \ref{alg:DiSCO} when the objective $J(\w)$ is strongly convex. 
We have the following inequality for the expected squared error between the iterate $\w_i(t)$ at node $i$ at time $t$ and the average $\bar{\w}(t)$ defined in Algorithm \ref{alg:DiSCO}:
	\begin{align}
	\sqrt{\E\left[\norm{ \bar{\w}(t)-\w_i(t) }^2 \right]}
		&\le \frac{2L}{\reg} \cdot \frac{\sqrt{m}}{b} \cdot \frac{\log(2bet^2)}{t}.
	\end{align}
where $b=\log\left(1/\lambda_2\left(\E\left[\bP^2(t)\right]\right)\right)$.
\end{lemma}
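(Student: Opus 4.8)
The plan is to track the deviation $\w_i(t) - \bar\w(t)$ using the update rule and the doubly-stochastic structure of $\bP(t)$. First I would write the iterate in ``unrolled'' form: since $\w_i(t+1) = \sum_j P_{ij}(t)\w_j(t) - \step_t \g_i(t)$ and averaging over $i$ (using double stochasticity) gives $\bar\w(t+1) = \bar\w(t) - \step_t \bar\g(t)$ where $\bar\g(t) = \frac1m\sum_i \g_i(t)$, the deviation of node $i$ from the mean is a sum over past gradient contributions propagated through products of the communication matrices. Concretely, collecting the iterates into a matrix $\mathbf{W}(t) \in \R^{m\times d}$ whose rows are $\w_i(t)^\trans$, we get $\mathbf{W}(t) - \mathbf{1}\bar\w(t)^\trans = -\sum_{s=1}^{t-1} \step_s \left(\bP(t-1)\cdots\bP(s+1) - \tfrac1m\mathbf{1}\mathbf{1}^\trans\right)\mathbf{G}(s)$, since the initialization is $\mathbf0$ and $(\bP(\tau)-\tfrac1m\mathbf{1}\mathbf{1}^\trans)$ annihilates the consensus component. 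The per-node deviation is then bounded by the operator norm of the reduced product of matrices times $\step_s \norm{\g_i(s)}$, and the gradient norms are controlled: $\norm{\g_i(s)} \le L + \reg\norm{\w_i(s)}$, which with $\step_s = 1/(\reg s)$ and a standard Pegasos-type argument gives $\norm{\w_i(s)} \le L/\reg$, hence $\norm{\g_i(s)} \le 2L$ (this is where strong convexity / the chosen step size enters).

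Next I would handle the expectation over the i.i.d.\ matrix sequence. The key fact is that $\E\left[\norm{(\bP(t-1)\cdots\bP(s+1) - \tfrac1m\mathbf{1}\mathbf{1}^\trans)v}^2\right]$ contracts geometrically: because the $\bP(\tau)$ are independent and doubly stochastic, conditioning successively and using $\E\left[\norm{(\bP(\tau)-\tfrac1m\mathbf{1}\mathbf{1}^\trans)u}^2\right] \le \lambda_2(\E[\bP^2(\tau)])\norm{u}^2$ for any $u \perp \mathbf1$ (this is exactly the spectral quantity appearing in the definition of $b$), we obtain a contraction factor $\lambda_2(\E[\bP^2])^{(t-1-s)}$ after $t-1-s$ steps. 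Taking square roots and summing, $\sqrt{\E\norm{\bar\w(t)-\w_i(t)}^2} \le 2L\sum_{s=1}^{t-1}\step_s \sqrt{m}\,\lambda_2(\E[\bP^2])^{(t-1-s)/2}$ up to constants (the $\sqrt m$ coming from comparing the $\ell_2$ norm of a single row to the Frobenius/operator bound on all $m$ rows).

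Finally I would estimate the sum $\sum_{s=1}^{t-1} \frac{1}{\reg s}\, e^{-b(t-1-s)/2}$. Splitting at $s = t/2$: for $s > t/2$ the factor $1/s \le 2/t$ and the geometric series sums to $O(1/b)$; for $s \le t/2$ the exponential is at most $e^{-bt/4}$ which, multiplied by the harmonic sum $\sum 1/(\reg s) = O(\log t /\reg)$, is negligible — and balancing these yields the $\frac{\log(2bet^2)}{bt}$ shape. Assembling the constants gives the claimed $\frac{2L}{\reg}\cdot\frac{\sqrt m}{b}\cdot\frac{\log(2bet^2)}{t}$. The main obstacle I anticipate is the second step: getting the geometric contraction in expectation with the \emph{correct} constant $\lambda_2(\E[\bP^2(t)])$ rather than a crude bound like $\E[\lambda_2(\bP(t))]^2$ or $\lambda_2(\E[\bP(t)])$ — this requires carefully expanding $\norm{(\bP-\tfrac1m\mathbf1\mathbf1^\trans)u}^2 = u^\trans(\bP^\trans\bP - \tfrac1m\mathbf1\mathbf1^\trans)u$ (using $\bP\mathbf1=\mathbf1$), taking expectations inside, and invoking that $\E[\bP^\trans\bP] = \E[\bP^2]$ is symmetric doubly stochastic so its action on the orthogonal complement of $\mathbf1$ is bounded by its second eigenvalue, together with a clean induction over the product using the independence and the tower property.
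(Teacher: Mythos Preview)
Your plan matches the paper's proof essentially step for step: unroll the recursion to write $\bar\w(t)-\w_i(t)$ as a sum of past gradients propagated through $\Phi(s{:}t)=\bP(t{-}1)\cdots\bP(s)$, bound the gradient norms, establish the contraction $\E\big[\|(\Phi(s{:}t)-\tfrac1m\mathbf 1\mathbf 1^\top)\mathbf e_i\|_2^2\big]\le \lambda_2(\E[\bP^2])^{t-s}$ via independence and the tower property exactly as you describe, and then sum the series (the paper picks up the $\sqrt m$ from $\|\cdot\|_1\le\sqrt m\|\cdot\|_2$ on the column $\Phi\mathbf e_i$ rather than a Frobenius argument, and evaluates the series by the integral bound $\sum_\tau a^\tau/(t-\tau+1)\le\log(2bet^2)/(bt)$ carried over from the deterministic lemma rather than by splitting at $t/2$, but these are cosmetic). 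One caveat worth flagging: the Pegasos-type bound $\|\w_i(s)\|\le L/\reg$ does not follow from the usual one-line induction here, because the shrinkage $-\step_s\reg\w_i(s)$ acts on the local iterate while the consensus step mixes in the neighbors $\w_j(s)$, so the contraction does not close; the paper is equally loose at this point and simply uses $\|\g_i(s)\|\le L$ without justification.
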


Armed with Lemma \ref{lemma:avgdevBndStoch} we prove the following theorem for Algorithm \ref{alg:DiSCO} in the case of stochastic communication.
\begin{theorem}
Let $\{\bP(t)\}$ be an i.i.d sequence of doubly stochastic matrices and $\specnorm^2=\sigma_1(\hat{\mbf{\Sigma}})$ denote the spectral norm of the sample covariance matrix. Consider Algorithm \ref{alg:DiSCO} when the objective $J(\w)$ is strongly convex, and $\step_t=1/(\reg t)$. Then if the number of samples on each machine $n$ satisfies 
	\begin{align}
	n > \frac{4}{3\specnorm^2} \log \left( d\right) 
	\end{align}
and the number of iterations $T$ satisfies
	\begin{align}
	T &> 2 e  \log(1/\sqrt{ \lambda_2(\E\left[\bP^2(t)\right])}) 
	\end{align}
and
	\begin{align}
	\frac{T}{\log(T)} &>
	 \max\Bigg( \frac{4}{3\specnorm^2}\log(d),
	 	\notag \\
		&\hspace{0.8in}
		\sqrt{\frac{8}{5}} \cdot\sqrt{ \frac{m}{\specnorm^2} } \cdot \frac{1}{\log(1/\lambda_2(\E\left[\bP^2(t)\right]))} \Bigg), 
	\end{align}
then the expected error for the output of each node $i$ satisfies 
\begin{align}
&\E\left[ J(\hat{\w}_i(T)) -J(\w^{*})  \right]  
\notag \\
&\le \left(\frac{1}{m} + \frac{100\sqrt{m\specnorm^2}\cdot \log T}{1-\sqrt{\lambda_2(\E\left[\bP^2(t)\right])}} \right)\cdot \frac{L^2}{\mu} \cdot \frac{\log T}{T}.
\end{align}

\label{theorem:mainThrmStoch}
\end{theorem}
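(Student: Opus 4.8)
The argument runs parallel to the proof of Theorem~\ref{theorem:mainThrm}: the only substantive change is that the deterministic network-error estimate used there is replaced by Lemma~\ref{lemma:avgdevBndStoch}, so that $\lambda_2(\bP)$ is replaced throughout by $\lambda_2(\E[\bP^2(t)])$ and the quantity $b=\log(1/\lambda_2(\E[\bP^2(t)]))$ plays the role of the deterministic mixing rate. Write $\bar{\w}(t)=\frac1m\sum_{i=1}^m\w_i(t)$. Since each $\bP(t)$ is doubly stochastic, averaging the update rule of Algorithm~\ref{alg:DiSCO} over $i$ annihilates the consensus term and gives the centralized recursion $\bar{\w}(t+1)=\bar{\w}(t)-\step_t\bar{\g}(t)$, where $\bar{\g}(t)=\frac1m\sum_i\g_i(t)$ has conditional mean $\frac1m\sum_i\grad J_i(\w_i(t))$ given $\mc{F}_{t-1}$. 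The problem thus splits into (i) a strongly convex SGD analysis of the single sequence $\bar{\w}(t)$, and (ii) control of the per-node deviations $\w_i(t)-\bar{\w}(t)$ via Lemma~\ref{lemma:avgdevBndStoch}.

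\noindent\textbf{Step (i).} I would expand $\norm{\bar{\w}(t+1)-\w^*}^2$, take conditional expectations, and use $\reg$-strong convexity of $J$ with $\step_t=1/(\reg t)$; the telescoping is cleanest after multiplying the recursion for $\E\norm{\bar{\w}(t)-\w^*}^2$ by $t$ and summing over $t\le T$, which produces the $\frac{\log T}{T}$ factor. Two features shape the bound. First, $\bar{\g}(t)$ is an average of $m$ conditionally independent stochastic gradients of norm at most $L$, so its conditional second moment carries an extra factor $1/m$ beyond the squared bias, giving the $\frac1m\cdot\frac{L^2}{\reg}\cdot\frac{\log T}{T}$ term. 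Second, the cross term $\ve{\frac1m\sum_i\grad J_i(\w_i(t))}{\bar{\w}(t)-\w^*}$ splits into a genuine progress term, handled by strong convexity exactly as in the centralized case, plus inner products involving the deviations $\bar{\w}(t)-\w_i(t)$. It is here — exactly as in Theorem~\ref{theorem:mainThrm}, via the iterate-decomposition technique of~\citep{CotterSSS:11nips,TakacBRS:13icml} rather than a crude Lipschitz bound — that the data enters: the relevant quantities reduce to expressions of the form $\E_{\x\sim\hat{\mc{P}}_i}\ve{\x}{\bar{\w}(t)-\w_i(t)}^2\le\sigma_1(\hat{\mbs{\Sigma}}_i)\norm{\bar{\w}(t)-\w_i(t)}^2$, where $\hat{\mbs{\Sigma}}_i$ is the second-moment matrix of the $n$ points at node $i$. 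A matrix Chernoff bound for the uniform split shows that once $n>\frac{4}{3\specnorm^2}\log d$, all $m$ operator norms $\sigma_1(\hat{\mbs{\Sigma}}_i)$ are simultaneously $O(\specnorm^2)$ on an event of probability $1-o(1)$ (on the complement one falls back to $\sigma_1\le 1$, whose contribution is negligible), and this is what upgrades a bare $\sqrt{m}$ to $\sqrt{m\specnorm^2}$.

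\noindent\textbf{Step (ii).} Bound each $\sqrt{\E\norm{\bar{\w}(t)-\w_i(t)}^2}$ by Lemma~\ref{lemma:avgdevBndStoch}, i.e.\ by $\frac{2L}{\reg}\cdot\frac{\sqrt m}{b}\cdot\frac{\log(2bet^2)}{t}$; the i.i.d.\ structure of $\{\bP(t)\}$ enters the lemma through $\E[\bP(t)^\trans\bP(t)]=\E[\bP^2(t)]$ together with a product-of-independent-matrices estimate, which is exactly why $\lambda_2(\E[\bP^2(t)])$ — and not $\E[\lambda_2(\bP(t))]^2$ — is the quantity the consensus recursion contracts by, and why $1-\sqrt{\lambda_2(\E[\bP^2(t)])}$ is the effective spectral gap. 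Feeding this into the deviation cross terms of Step~(i) (on the spectral-norm event) and into the final Jensen step that passes from $\bar{\w}(t)$ to the node output $\hat{\w}_i(T)=\frac1T\sum_t\w_i(t)$, then summing against $t\step_t=1/\reg$, yields $\sum_{t\le T}\frac{\log t}{t}=O((\log T)^2)$ and hence the $\frac{\sqrt{m\specnorm^2}\,(\log T)^2}{(1-\sqrt{\lambda_2(\E[\bP^2(t)])})\,T}$ contribution; the stated lower bounds on $T$ are precisely what is needed to dominate the initialization term and the transient terms (from the regime $bt=O(1)$, before mixing takes effect) by the displayed expression. No genuinely new idea is needed beyond Theorem~\ref{theorem:mainThrm} and Lemma~\ref{lemma:avgdevBndStoch}; the main obstacle is bookkeeping — carrying three sources of randomness (the data split, the sampling in the gradient steps, and the communication matrices $\bP(t)$) through the telescoping sum at once while keeping the event $\{\sigma_1(\hat{\mbs{\Sigma}}_i)=O(\specnorm^2)\ \forall i\}$ in force, and checking that the constants from the matrix Chernoff bound and from Lemma~\ref{lemma:avgdevBndStoch} combine into the $100$, $4/3$ and $\sqrt{8/5}$ of the statement.
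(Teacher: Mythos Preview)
Your proposal is correct and follows the paper's own route exactly: since the decomposition \eqref{eq:mainBnd} into $\mathrm{T1}+\mathrm{T2}+\mathrm{T3}$ does not use that $\bP(t)$ is constant, one simply swaps Lemma~\ref{lemma:avgdevBnd} for Lemma~\ref{lemma:avgdevBndStoch} in the bound on $\mathrm{T3}$ and proceeds verbatim, with $\lambda_2(\E[\bP^2(t)])$ replacing $\lambda_2(\bP)$. One small technical deviation worth noting: the paper gets the $O(\specnorm^2)$ control on the local and sampled-history Gram matrices via the direct expectation bound of Lemma~\ref{lem:specnormIntdim} (applied inside the gradient-norm estimates \eqref{eq:itravgnrmBnd}--\eqref{eq:itrnrmBnd}), rather than the high-probability event plus complement argument you sketch --- this avoids a union bound over the $m$ nodes and is where the $\rho$ factor in $\sqrt{m\rho^2}$ actually enters.
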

\noindent \textit{Remark:} This result generalizes the conclusions of Theorem \ref{theorem:mainThrm} to the case of stochastic communication schemes. Thus allowing for the data dependent interpretations of convergence in a more general setting.

\section{Limiting Communication~\label{section:LessComm}}

As an application of the stochastic communication scenario of Theorem \eqref{theorem:mainThrmStoch} we now analyze the effect of reducing the communication overhead of Algorithm \ref{alg:DiSCO}. This reduction can improve the overall running time (``wall time'') of the algorithm because communication latency can hinder the convergence of many algorithms in practice~\citep{TsianosNIPS2012}.  A natural way of limiting communication is to communicate only a fraction $\commfrac$ of the $T$ total iterations; at other times nodes simply perform local gradient steps.

We consider a sequence of i.i.d~random matrices $\{\bP(t)\}$ for Algorithm \ref{alg:DiSCO}
where $\bP(t) \in \{ \mbf{I},\bP \}$ with probabilities $1 - \commfrac$ and $\commfrac$, respectively, 
where $\mathbf{I}$ is the identity matrix (implying no communication since $P_{ij}(t)=0$ for $i \ne j$) and, as in the previous section, $\bP$ is a fixed doubly stochastic matrix respecting the graph constraints. For this model the expected number of times communication takes place is simply $\nu T$.   Note that now we have an additional randomization due to the Bernoulli distribution over the doubly stochastic matrices.  Analyzing a matrix $\bP(t)$ that depends on the current value of the iterates is considerably more complicated.

A straightforward application of Theorem \ref{theorem:mainThrmStoch} reveals that the optimization error is proportional to $\frac{1}{\nu}$ and decays as $\mc{O}(\frac{1}{\nu} \cdot \frac{\log^2(T)}{T})$. However, this ignores the effect of the local communication-free iterations. %

\textbf{A mini-batch approach.} To account for local communication free iterations we modify the intermittent communication scheme to follow a deterministic schedule of communication every $1/\nu$ steps. However, instead of taking single gradient steps between communication rounds, each node gathers the (sub)gradients and then takes an aggregate gradient step. That is, after the $t$-th round of communication, the node samples a batch $\mc{I}_t$ of indices sampled with replacement from its local data set with $|\mc{I}_t| = 1/\nu$. We can think of this as the base algorithm with a better gradient estimate at each step. The update rule is now
\begin{align}
\w_i(t+1) = \sum_{j \in \mc{N}_i} \w_j(t) P_{ij}(t) - \step_t \nu \sum_{ i \in \mc{I}_i } \g_i(t).
\label{eq:mbatchComm}
\end{align} 
We define $\g^{1/\nu}_i(t) = \sum_{ i \in \mc{I}_i } \g_i(t)$. Now the iteration count is over the communication steps and $\g^{1/\nu}_i(t)$ is the aggregated mini-batch (sub)gradient of size $1/\nu$. Note that this is analogous to the random scheme above but the analysis is more tractable.

\begin{theorem}
Fix a Markov matrix $\bP$ and let $\specnorm^2=\sigma_1(\mbf{\hat{\Sigma}})$ denote the spectral norm of the covariance matrix of the data distribution. Consider Algorithm \ref{alg:DiSCO} when the objective $J(\w)$ is strongly convex, $\bP(t) = \bP$ for all $t$, and $\step_t=1/(\reg t)$ for scheme \eqref{eq:mbatchComm}.  Let $\lambda_2(\bP)$ denote the second largest eigenvalue of $\bP$.  Then if the number of samples on each machine $n$ satisfies 
	\begin{align}
	n > \frac{4}{3 \rho^2} \log \left ( d\right ) 
	\end{align}
and 
	\begin{align}
	&T > \frac{2e}{\nu}  \log(1/\sqrt{ \lambda_2(\bP)}) \notag \\
	&\frac{T}{\log(\nu T)} > \max\left(\frac{4}{3\nu \specnorm^2}\log(d), \frac{\left( \frac{8}{5} \right)^{\frac{1}{4}} \sqrt{ m/\specnorm^2 }}{\log(1/\lambda_2)}\right) \notag \\
	&\frac{1}{\nu} > \frac{4}{3\rho^2} \cdot \log(d)
	 \label{eq:thm:Tbound}
	\end{align}
	and 
then the expected error for each node $i$ satisfies 
	\begin{align}
	&\E\left [ J(\hat{\w}_i(T)) -J(\w^{*})  \right ]  \notag \\
	&\le \left( \frac{1}{m} 
		+ 200\sqrt{5} 
			\cdot \frac{\sqrt{m\specnorm^4}\cdot \log (\nu T)}{1-\sqrt{\lambda_2}} 
		\right) \notag \\
		&\hspace{1in}\cdot \frac{L^2}{\reg} \cdot \frac{\log (\nu T)}{T}.
\end{align}
where $\nu$ is the frequency of communication and where $\lambda_2=\lambda_2(\bP)$.
\label{theorem:mainThrm:mbatch}
\end{theorem}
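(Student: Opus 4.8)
The plan is to reduce Theorem \ref{theorem:mainThrm:mbatch} to the stochastic-communication machinery already developed, viewing the mini-batch scheme \eqref{eq:mbatchComm} as Algorithm \ref{alg:DiSCO} run on a horizon of $\nu T$ communication rounds with an aggregated gradient of size $1/\nu$ at each round. First I would set up the basic decomposition: writing $\bar\w(t) = \frac1m \sum_j \w_j(t)$, the average iterate satisfies an SGD-type recursion with step-size $\step_t = 1/(\reg t)$ and gradient $\nu \sum_{k \in \mc I_t} \g_k(t)$, plus a perturbation coming from the fact that each node evaluates its gradient at $\w_i(t)$ rather than at $\bar\w(t)$. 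By $L$-Lipschitzness this perturbation is controlled by $\sum_i \norm{\bar\w(t) - \w_i(t)}$, exactly the network error bounded in Lemma \ref{lemma:avgdevBndStoch}. So the two ingredients are (i) the standard strongly convex SGD bound with the $1/(\reg t)$ step-size, which gives the $\frac1m \cdot \frac{L^2}{\reg}\cdot\frac{\log(\nu T)}{T}$-type leading term, and (ii) the network-error term obtained by feeding the effective weight matrix of the intermittent scheme into Lemma \ref{lemma:avgdevBndStoch}.

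The key computation is step (ii). In the deterministic schedule ``communicate every $1/\nu$ steps with $\bP$'' the relevant quantity playing the role of $\lambda_2(\E[\bP^2(t)])$ in Lemma \ref{lemma:avgdevBndStoch} is $\lambda_2(\bP)$ itself once we index time by communication rounds — but the subtlety is that between communications each node accumulates $1/\nu$ local gradient steps, so the deviation $\norm{\bar\w(t)-\w_i(t)}$ picks up an extra factor. I would bound the within-block drift by summing $\step_t \cdot \nu \cdot \frac1\nu \cdot (\text{gradient bound})$ over a block; here the gradient of the local objective at $\w_i(t)$ has norm at most $L\norm{\x} + \reg\norm{\w_i(t)} \le L + \reg\norm{\w_i(t)}$, and one uses the a priori bound $\norm{\w_i(t)} \le L/\reg$ (standard for this step-size, provable by induction exactly as in the Pegasos analysis) to get a clean $\mc O(L)$ gradient magnitude. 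Combining with the contraction $\sqrt{\lambda_2(\bP)}$ per communication round and summing the geometric-type series yields the network error $\lesssim \frac{L}{\reg}\cdot\frac{\sqrt m}{1-\sqrt{\lambda_2}}\cdot\frac{\log(\nu T)}{\nu T}$, i.e. the same shape as in Theorem \ref{theorem:mainThrmStoch} but with $T$ replaced by $\nu T$ and an extra factor from the block length.

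Next I would bring in the data dependence: the term $\sqrt m$ in the crude network bound gets sharpened to $\sqrt{m}\cdot\specnorm$ (and after squaring, the extra $\specnorm^2$ producing the $\sqrt{m\specnorm^4}$ in the statement) by the spectral-norm subsampling argument — each node's sampled gradient contributes $\x_{i,t}\x_{i,t}^\trans$-type quadratic forms whose expectation is $\hat\Sigma$, and the matrix Chernoff bound (which is exactly where the hypothesis $n > \frac{4}{3\specnorm^2}\log d$ and the analogous $\frac1\nu > \frac{4}{3\specnorm^2}\log d$ on the batch size enter, guaranteeing the sampled second-moment matrix concentrates around $\hat\Sigma$ with spectral norm $\mc O(\specnorm^2)$) upgrades the trivial $\|\cdot\|\le 1$ bound to $\mc O(\specnorm)$. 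This is the same mechanism as in Theorem \ref{theorem:mainThrm}; I would cite/reuse that lemma rather than reprove it. Finally I would assemble the two terms, absorb lower-order contributions into constants using the iteration lower bounds \eqref{eq:thm:Tbound} (these are precisely what make the $\log$ factors and the crossover between the $1/m$ term and the network term come out as stated), and read off the claimed inequality.

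The main obstacle I anticipate is step (ii)'s bookkeeping: correctly accounting for the $1/\nu$ local steps between communications while keeping the extra factors from blowing up the bound, and in particular making sure the two occurrences of the batch-size/sample-size condition ($n > \frac{4}{3\specnorm^2}\log d$ for the gradient-variance concentration at each node, and $\frac1\nu > \frac{4}{3\specnorm^2}\log d$ for the mini-batch gradient concentration) are both genuinely used and suffice. Everything else is a routine re-run of the arguments behind Theorem \ref{theorem:mainThrm} and Lemma \ref{lemma:avgdevBndStoch} with $T \mapsto \nu T$.
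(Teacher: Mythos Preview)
Your high-level plan---rerun the proof of Theorem \ref{theorem:mainThrm} with a modified network-error lemma and the substitution $T\mapsto \nu T$---is exactly what the paper does. But your description of step (ii) misidentifies the mechanism and, if executed as written, would not produce the extra $\specnorm$ in $\sqrt{m\specnorm^4}$.

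In scheme \eqref{eq:mbatchComm} there are \emph{no} local iterate updates between communications, hence no ``within-block drift'' to sum. Each communication round $t$ carries a single update with the aggregated gradient $\g_i^{1/\nu}(t)=\nu\sum_{k\in\mc I_t}\partial\ell(\w_i(t)^\trans\x_k)\x_k$. The paper's network lemma (Lemma \ref{lemma:avgdevBndmbatch}) therefore has the same structure as Lemma \ref{lemma:avgdevBnd}, with the \emph{only} change being the gradient-norm input: instead of the crude $\norm{\g_i(t)}\le L$, one writes
\[
\norm{\g_i^{1/\nu}(s)}^2 \;=\; \nu^2\,\mbs\beta^\trans \Q_{1/\nu}\,\mbs\beta \;\le\; L^2\,\nu\,\spec(\Q_{1/\nu}),
\]
where $\Q_{1/\nu}$ is the Gram matrix of the $1/\nu$ batch points. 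Taking expectations and invoking the matrix-Bernstein lemma (this is precisely where $1/\nu>\tfrac{4}{3\specnorm^2}\log d$ is used) gives $\E[\nu\,\spec(\Q_{1/\nu})]\le 5\specnorm^2$, so the $L$ in the network bound becomes $L\sqrt{5\specnorm^2}$. That extra $\specnorm$ propagates through T3 and, multiplied by the $\specnorm$ already present from the gradient-norm bounds \eqref{eq:itravgnrmBnd}--\eqref{eq:itrnrmBnd}, yields $\sqrt{m\specnorm^4}$.

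Your route---first bound the gradient by $L+\reg\norm{\w_i(t)}\le 2L$ to get a $\sqrt{m}$ network error, then ``sharpen'' afterward---does not work: the $\specnorm$ must enter \emph{inside} the network recursion, because it multiplies each term of the series $\sum_s \step_s\norm{\tfrac1m-(\bP^{t-s})_i}_1$. Once you have collapsed that series with the crude $L$, there is no place left to insert the batch-Gram concentration. Fix this by bounding $\norm{\g_i^{1/\nu}}$ via $\Q_{1/\nu}$ from the outset; the rest of your plan then goes through verbatim.
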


\noindent \textit{Remark:} Theorem \eqref{theorem:mainThrm:mbatch} suggests that if the inverse frequency of communication is large enough then we can obtain a sharper bound on the error by a factor of $\rho$. This is significantly better than a $\mc{O}(\sqrt{m\specnorm^2}\cdot\frac{\log \nu T}{\nu T})$ baseline guarantee from a direct application of Theorem \ref{theorem:mainThrm} when the number of iterations is $\nu T$.

Additionally the result suggests that if we communicate on a mini batch(where batch size $b=1/\nu$) that is large enough we can improve Theorem \ref{theorem:mainThrm}, specifically now we get a $1/m$ improvement when $m \leq 1/\rho^{4/3}$.

\section{Asymptotic Regime}
In this section we explore the sub-optimality of distributed primal averaging when $T \rightarrow \infty$ for the case of smooth strongly convex objectives. The results of Section \eqref{section:CommEveryIteration} suggest that we never gain from adding more machines in any network. Now we investigate the behaviour of Algorithm \ref{alg:DiSCO} in the asymptotic regime and show that the network effect disappears and we do indeed gain from more machines in any network.

Our analysis depends on the asymptotic normality of a variation of Algorithm \ref{alg:DiSCO}~\cite[Theorem 5]{BianchiFortHachem:13IEEETrans}. The main differences between Algorithm \ref{alg:DiSCO} and the consensus algorithm of Bianchi et al.~\cite{BianchiFortHachem:13IEEETrans} is that we average the iterates before making the local update. 

We make the following assumptions for the analysis in this section: (1) The loss function differentials $\{ \partial \left(\ell(\cdot) \right) \}$ are differentiable and $G$-Lipschitz for some $G>0$, (2) the stochastic gradients are of the form $\g_i(t)=\nabla J(\w_i(t)) + \mbs{\xi}_t$ where $\E[\mbs{\xi}_t] = \mbf{0}$ and $\E[\mbs{\xi}_t \mbs{\xi}^{\trans}_t] = \mbf{C}$, and (3) there exists $p>0$ such that $\E\left[ \norm{\mbs{\xi}_t}^{2+p} \right] < \infty$.
Our results hold for all smooth strongly convex objectives not necessarily dependent on $\w^{\trans}\x$.

\begin{lemma}   \label{lemma:mainThrmLarge}
Fix a Markov matrix $\bP$. Consider Algorithm \ref{alg:DiSCO} when the objective $J(\w)$ is strongly convex and twice differentiable, $\bP(t) = \bP$ for all $t$, and $\step_t=1/(\lambda t)$.  
then the expected error for each node $i$ satisfies for a arbitrary split of $N$ samples into $m$ nodes
\begin{align}
&\limsup\limits_{T\rightarrow \infty} T \cdot \E\left[J\left(\sum_{j=1}^m P_{ij} \w_j(T)\right) - J(\w^{*})\right] \notag \\
&\le
\sum_{j \in \mc{N}(i)}(P_{ij})^2 \cdot \Tr\left(\mbf{H}\right) \cdot \frac{G}{\reg}
\end{align}
where $\mbf{H}$ is the solution to the equation
\begin{align}
\nabla J^2(\w^{*}) \mbf{H} + \mbf{H} \nabla J^2(\w^{*})^{T} = \mbf{C}.
\end{align}
\end{lemma}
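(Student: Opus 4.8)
The plan is to reduce the statement to the asymptotic normality result of Bianchi et al.~\cite[Theorem~5]{BianchiFortHachem:13IEEETrans} applied to a suitably identified consensus iteration, then translate the resulting covariance of the limiting Gaussian into a bound on the rescaled expected suboptimality via a second-order Taylor expansion of $J$ around $\w^{*}$. First I would set up the matching: with $\step_t = 1/(\reg t)$ the local update $\w_i(t+1) = \sum_j P_{ij}\w_j(t) - \step_t \g_i(t)$ is, up to the ``average-before-update'' convention noted in the text, of exactly the form covered by their theorem, whose hypotheses are supplied by our assumptions (1)--(3): differentiability and $G$-Lipschitzness of $\partial\ell$ give smoothness of $\nabla J$, the noise model $\g_i(t) = \nabla J(\w_i(t)) + \mbs{\xi}_t$ with $\E[\mbs{\xi}_t]=\mbf 0$, $\E[\mbs{\xi}_t\mbs{\xi}_t^{\trans}]=\mbf C$ is their martingale-difference condition, and the $(2+p)$-moment bound is their Lyapunov-type condition. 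Their conclusion is that $\sqrt{T}(\w_i(T) - \w^{*})$ is asymptotically normal with mean zero and a covariance $\mbs{\Sigma}_i$ determined by the Lyapunov equation $\nabla^2 J(\w^{*})\mbf H + \mbf H \nabla^2 J(\w^{*}) = \mbf C$ together with a factor $\sum_{j\in\mc N(i)} P_{ij}^2$ coming from the variance reduction induced by averaging the neighbors' noises (the $P_{ij}$ are the mixing weights, and since $\sum_j P_{ij}^2 \le 1$ with equality only in the disconnected case, this is where ``more machines help'' enters).

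Next I would pass from the distributional statement to the expectation. Writing $\w_i^+(T) \eqdef \sum_j P_{ij}\w_j(T)$ and Taylor-expanding, $J(\w_i^+(T)) - J(\w^{*}) = \tfrac12 (\w_i^+(T)-\w^{*})^{\trans}\nabla^2 J(\w^{*})(\w_i^+(T)-\w^{*}) + o(\|\w_i^+(T)-\w^{*}\|^2)$, because $\nabla J(\w^{*}) = 0$. Multiplying by $T$ and taking $\limsup$, the asymptotic normality of $\sqrt{T}(\w_i^+(T)-\w^{*})$ (which inherits the same covariance structure, scaled by $\sum_{j\in\mc N(i)}P_{ij}^2$, from the joint convergence of the $\w_j(T)$) gives $\limsup_T T\,\E[J(\w_i^+(T)) - J(\w^{*})] \le \tfrac12 \Tr(\nabla^2 J(\w^{*})\,\mbs{\Sigma}_i)$ where $\mbs{\Sigma}_i = \big(\sum_{j\in\mc N(i)}P_{ij}^2\big)\,\mbf H$ (here one needs uniform integrability of $T\|\w_i^+(T)-\w^{*}\|^2$ to move the $\limsup$ inside the expectation, which again follows from the moment hypotheses in Bianchi et al.). It then remains to bound $\Tr(\nabla^2 J(\w^{*})\mbf H)$: using $\reg$-strong convexity, $\nabla^2 J(\w^{*}) \preceq (\text{something}) I$ is the wrong direction, so instead I would exploit that $\mbf H \succeq 0$ solves the Lyapunov equation and bound $\Tr(\nabla^2 J(\w^{*})\mbf H) \le G\,\Tr(\mbf H)/\reg$ by combining the $G$-Lipschitz bound on the Hessian (so $\nabla^2 J(\w^{*}) \preceq G I$, absorbing the regularization) with a $1/\reg$ factor extracted from the Lyapunov equation itself — rearranging $\mbf H = \int_0^\infty e^{-s\nabla^2 J(\w^{*})}\mbf C\, e^{-s\nabla^2 J(\w^{*})}\,ds$ and using $\nabla^2 J(\w^{*})\succeq \reg I$ to control the exponential decay. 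Collecting constants yields the stated $\sum_{j\in\mc N(i)}(P_{ij})^2 \cdot \Tr(\mbf H)\cdot G/\reg$.

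The main obstacle I anticipate is the careful bookkeeping in the first step: verifying that our Algorithm~\ref{alg:DiSCO}, with its ``average-then-gradient'' ordering and the specific step size $\step_t = 1/(\reg t)$, really falls within the scope of \cite[Theorem~5]{BianchiFortHachem:13IEEETrans}, and in particular extracting the precise dependence of the limiting covariance on the weights $P_{ij}$ (the $\sum_j P_{ij}^2$ factor). The averaging convention changes the effective noise injected at each step from $\mbs{\xi}_t$ to a $P$-weighted combination across neighbors, and one must check that the consensus subspace analysis in Bianchi et al.\ still applies — essentially that the disagreement components decay fast enough that only the projection onto the all-ones vector contributes to the $\sqrt{T}$-scale fluctuations, with the stated variance-reduction factor. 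The remaining steps (Taylor expansion, uniform integrability, the Lyapunov/trace bound) are comparatively routine given the asymptotic normality.
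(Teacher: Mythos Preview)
Your overall strategy---invoke Bianchi et al.~\cite[Theorem~5]{BianchiFortHachem:13IEEETrans} to obtain asymptotic normality of $\w_i(T)-\w^{*}$, then convert to a suboptimality bound---matches the paper. But you overcomplicate the conversion step and misidentify where the $1/\reg$ factor comes from. The paper does not Taylor-expand and then wrestle with $\Tr(\nabla^2 J(\w^{*})\mbf H)$ via the Lyapunov integral; it simply uses the smoothness (descent-lemma) bound $J(\w)-J(\w^{*})\le \tfrac{G}{2}\|\w-\w^{*}\|^2$, so the only quantity needed is $\E[\|\cdot - \w^{*}\|^2]$, which is the trace of the limiting covariance. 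The $1/\reg$ in the final bound arises because the asymptotic normality holds at scale $\sqrt{\reg t}$ (the step size is $1/(\reg t)$), i.e.\ $\sqrt{\reg t}(\w_i(t)-\w^{*})\Rightarrow \mathcal N(\mbf 0,\mbf H)$, so $t\,\E[\|\w_i(t)-\w^{*}\|^2]\to \Tr(\mbf H)/\reg$. Your attempt to extract $1/\reg$ from $\mbf H=\int_0^\infty e^{-s\nabla^2 J}\mbf C e^{-s\nabla^2 J}\,ds$ is unnecessary and, as written, not clearly correct.

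On your ``main obstacle'': the paper handles the averaging-convention mismatch by the decomposition $\w_i(t)-\w^{*}=(\w_i(t)-\bar\w(t))+(\bar\w(t)-\w^{*})$, observing that both Algorithm~\ref{alg:DiSCO} and the Bianchi et al.\ iteration have the \emph{same} node-average $\bar\w(t)$, and using Lemma~\ref{lemma:avgdevBnd} to show the network error $\w_i(t)-\bar\w(t)$ vanishes at the relevant scale. Thus asymptotic normality of $\bar\w(t)-\w^{*}$ (from Bianchi et al.) transfers directly to each $\w_i(t)-\w^{*}$ with covariance $\mbf H$. The $\sum_{j\in\mc N(i)}P_{ij}^2$ factor then appears not from the consensus-subspace analysis you describe, but simply by forming the $P$-weighted sum $\sum_j P_{ij}(\w_j(T)-\w^{*})$ of the (asymptotically Gaussian) node errors and reading off the variance of the combination.
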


\noindent \textit{Remark:}  This result shows that asymptotically the network effect from Theorem \ref{theorem:mainThrmStoch} disappears and that additional nodes can speed convergence.

An application of Lemma \ref{lemma:mainThrmLarge} to problem \eqref{eq:optForm} gives us the following result for the specialized case of a $k$-regular graph with constant weight matrix $\bP$.

\begin{theorem}   \label{theorem:mainThrmLargeApplication}
Consider Algorithm \ref{alg:DiSCO} when the objective $J(\w)$ has the form \ref{eq:optForm}
, $\bP(t) = \bP$ and corresponds to a $k$-regular graph with uniform weights for all $t$, and $\step_t=1/(\lambda t)$.  
then the expected error for each node $i$ satisfies 
\begin{align}
&\limsup\limits_{T\rightarrow \infty} T \cdot \E\left[J\left(\sum_{j=1}^m P_{ij} \w_j(T)\right) - J(\w^{*})\right] \notag \\
&\le \frac{25\rho L^2}{k} \cdot \Tr\left(\nabla^2 J(\w^{*})^{-1} \right) \cdot \frac{G}{\reg} 
 \end{align}
where the expectation is with respect to the history of the sampled gradients as well as the uniform random splits of $N$ data points across $m$ machines.
\end{theorem}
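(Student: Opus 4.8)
The plan is to specialize Lemma~\ref{lemma:mainThrmLarge} to the weight matrix $\bP$ of a $k$-regular graph with uniform weights and then estimate the two quantities appearing on the right-hand side of that lemma: the network coefficient $\sum_{j\in\mathcal{N}(i)}(P_{ij})^2$ and the trace $\Tr(\mathbf{H})$ of the solution of the Lyapunov equation $\nabla^2 J(\w^{*})\mathbf{H}+\mathbf{H}\nabla^2 J(\w^{*})=\mathbf{C}$. So most of the work is (i) checking that the hypotheses of Lemma~\ref{lemma:mainThrmLarge} are met by the objective \eqref{eq:optForm}, (ii) a one-line evaluation of $\sum_{j\in\mathcal{N}(i)}(P_{ij})^2$, and (iii) a matrix-analytic bound on $\Tr(\mathbf{H})$ in terms of $\Tr(\nabla^2 J(\w^{*})^{-1})$ and $\specnorm$.

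First I would check the hypotheses of Lemma~\ref{lemma:mainThrmLarge}. Strong convexity and twice-differentiability of \eqref{eq:optForm} follow from the $\ell_2$ regularization together with assumption (1) of this section ($\partial\ell$ differentiable and $G$-Lipschitz); the stochastic gradient $\g_i(t)=\partial\ell(\w_i(t)^{\trans}\x_{i,t})\x_{i,t}+\reg\w_i(t)$ produced by Algorithm~\ref{alg:DiSCO} has the additive-noise form $\g_i(t)=\nabla J(\w_i(t))+\mbs{\xi}_t$ with $\E[\mbs{\xi}_t\mid\w_i(t)]=\mbf{0}$ once the expectation is taken over the uniform random split as well (a uniform split followed by uniform sampling from a node's local pool is a single uniform draw from the whole data set, so $\E[\g_i(t)\mid\w_i(t)]=\nabla J(\w_i(t))$); and the bounded $(2+p)$-moment condition is immediate since $\|\partial\ell(\cdot)\x_{i,t}\|\le L$ and the iterates stay bounded. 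For the $k$-regular graph with uniform weights, $P_{ij}=1/k$ on the $k$ nonzero entries of row $i$, so $\sum_{j\in\mathcal{N}(i)}(P_{ij})^2=k\cdot(1/k)^2=1/k$, which accounts for the $1/k$ factor in the statement.

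It then remains to bound $\Tr(\mathbf{H})$. I would first bound the noise covariance: evaluated at $\w^{*}$, the $\reg\w^{*}$ term is deterministic, and $\nabla J(\w^{*})=\mbf{0}$, so $\mathbf{C}$ is the covariance of $\partial\ell(\w^{*\trans}\x)\x$ under $\x\sim\hat{\mathcal{P}}$; hence $\mathbf{C}\preceq\E_{\hat{\mathcal{P}}}[\partial\ell(\w^{*\trans}\x)^2\,\x\x^{\trans}]\preceq L^2\,\E_{\hat{\mathcal{P}}}[\x\x^{\trans}]=L^2\mbs{\hat{\Sigma}}$, so $\sigma_1(\mathbf{C})\le L^2\specnorm^2$. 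Since $\nabla^2 J(\w^{*})\succeq\reg I\succ 0$, the Lyapunov equation has the closed form $\mathbf{H}=\int_0^\infty \exp(-t\,\nabla^2 J(\w^{*}))\,\mathbf{C}\,\exp(-t\,\nabla^2 J(\w^{*}))\,dt$, so that $\Tr(\mathbf{H})=\int_0^\infty\Tr\big(\mathbf{C}\,\exp(-2t\,\nabla^2 J(\w^{*}))\big)\,dt\le\sigma_1(\mathbf{C})\int_0^\infty\Tr\big(\exp(-2t\,\nabla^2 J(\w^{*}))\big)\,dt=\tfrac12\,\sigma_1(\mathbf{C})\,\Tr(\nabla^2 J(\w^{*})^{-1})$, using $\Tr(XY)\le\sigma_1(X)\Tr(Y)$ for positive semidefinite $X,Y$. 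Combining, $\Tr(\mathbf{H})\le\tfrac12 L^2\specnorm^2\,\Tr(\nabla^2 J(\w^{*})^{-1})$; since $\|\x\|\le 1$ forces $\specnorm^2=\sigma_1(\mbs{\hat{\Sigma}})\le\Tr(\mbs{\hat{\Sigma}})\le 1$, we have $\specnorm^2\le\specnorm$, and feeding this into Lemma~\ref{lemma:mainThrmLarge} with $\sum_{j\in\mathcal{N}(i)}(P_{ij})^2=1/k$ gives the claimed bound (the constant $25$ being a generous upper bound that absorbs the factor $\tfrac12$ and any numerical constants inherited from Lemma~\ref{lemma:mainThrmLarge} and the Bianchi et al.\ asymptotic normality result).

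The step I expect to be the main obstacle is the bookkeeping around $\mathbf{C}$: one has to be careful that the additive-noise decomposition of Lemma~\ref{lemma:mainThrmLarge} refers to deviations from the \emph{global} gradient $\nabla J$, which only holds after averaging over the random split, and that evaluating $\mathbf{C}$ at $\w^{*}$ rather than at the running iterate is exactly what the asymptotic regime of Bianchi et al.\ licenses; by comparison the matrix-analysis steps (the integral form of the Lyapunov solution, the trace inequality, and $\specnorm^2\le 1$) are routine.
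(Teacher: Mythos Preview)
Your proposal is correct and follows the same high-level route as the paper: specialize Lemma~\ref{lemma:mainThrmLarge}, evaluate $\sum_{j\in\mathcal{N}(i)}(P_{ij})^2=1/k$, bound $\sigma_1(\mathbf{C})$, and then control $\Tr(\mathbf{H})$ via the inequality $\Tr(\mathbf{H})\le\tfrac12\,\sigma_1(\mathbf{C})\,\Tr(\nabla^2 J(\w^{*})^{-1})$. Two technical choices differ. First, the paper bounds $\sigma_1(\mathbf{C})$ at the running iterate $\w_i(t)$ and then invokes the earlier iterate-norm estimates \eqref{eq:localiter:norm} and \eqref{eq:itrnrmBnd} (valid once $T\to\infty$), arriving at $\sigma_1(\mathbf{C})\le 50\,\specnorm L^2$; you instead evaluate $\mathbf{C}$ directly at $\w^{*}$, which is cleaner and gives the tighter $\sigma_1(\mathbf{C})\le L^2\specnorm^2$. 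Second, the paper simply writes $\mathbf{H}=\mathbf{C}\,(\nabla^2 J(\w^{*}))^{-1}/2$, which is literally the Lyapunov solution only when $\mathbf{C}$ and $\nabla^2 J(\w^{*})$ commute, whereas your integral representation is valid in general; both routes yield the same trace bound. Your argument is thus slightly sharper, and the constant $25$ in the statement is indeed slack from your vantage point---in the paper it arises exactly as $50\,\specnorm L^2/2$ from the coarser covariance estimate, not from any hidden constant in Lemma~\ref{lemma:mainThrmLarge} or the Bianchi et al.\ result.
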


\noindent \textit{Remark:}  For objective \eqref{eq:optForm} we obtain a $1/k$ variance reduction and the network effect disappears.

\section{Experiments}
\begin{figure*}[t]
\centering
\scalebox{0.9}{
\includegraphics[width=3.5in]{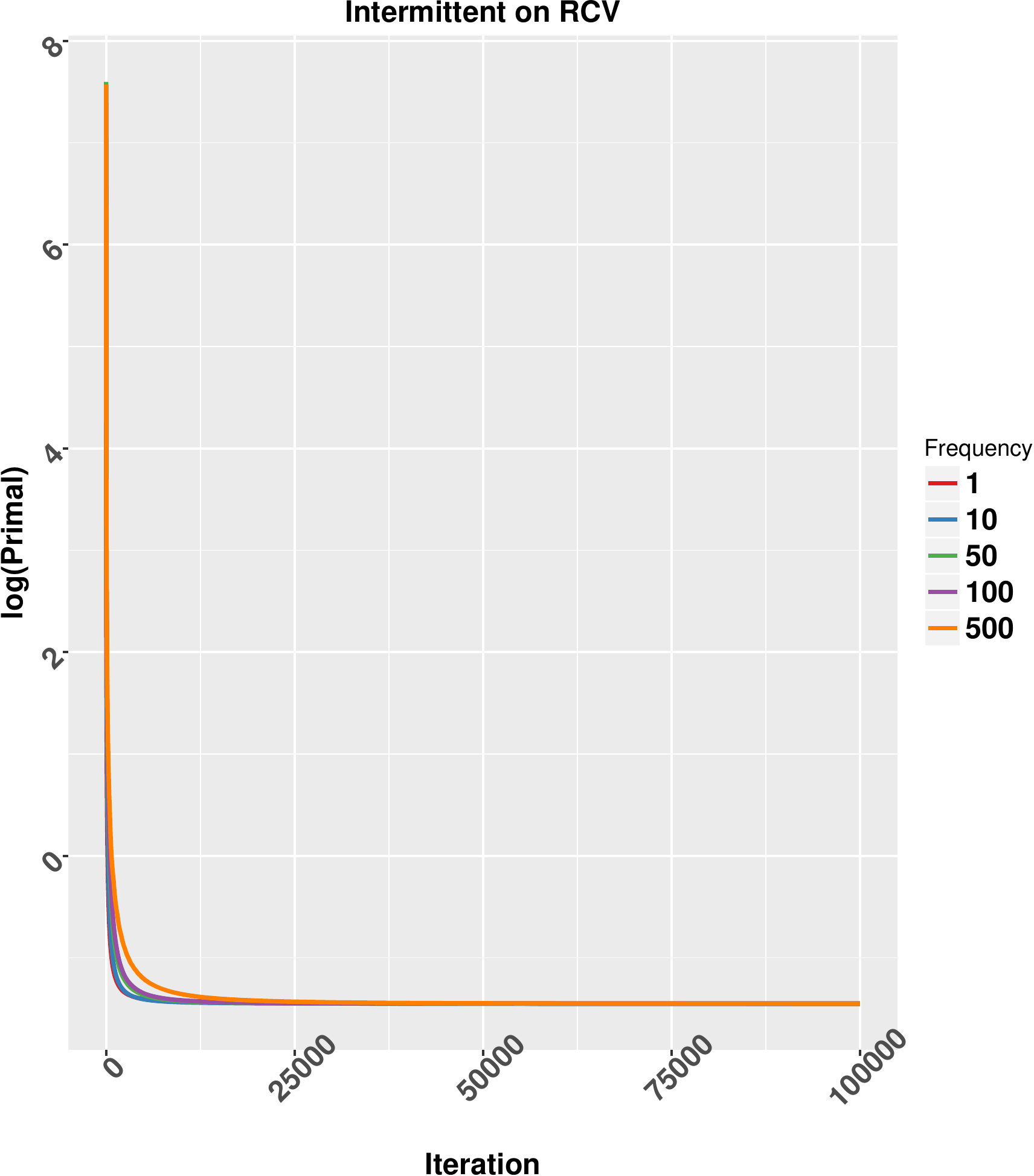}
\includegraphics[width=3.5in]{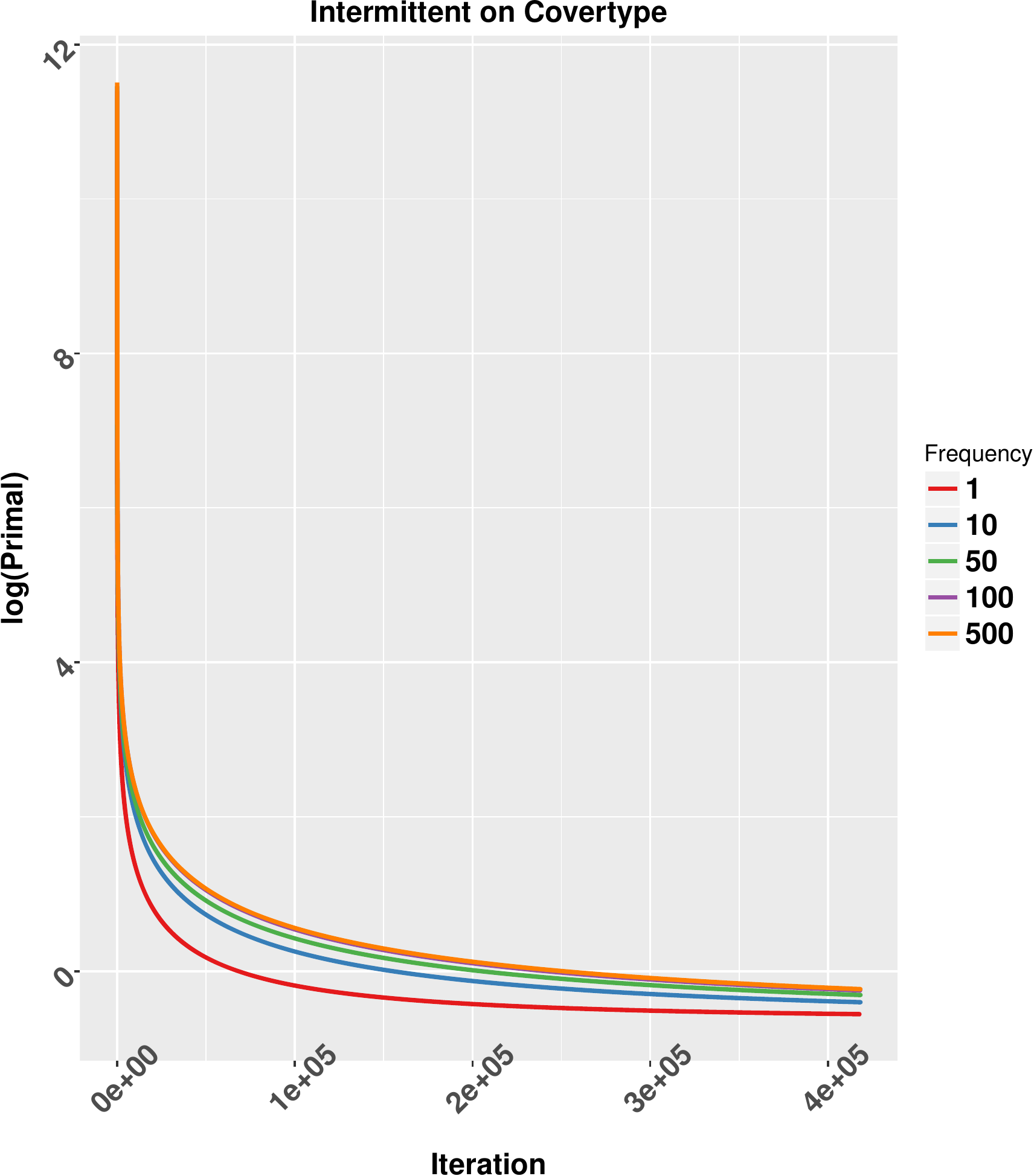}}
\label{fig:dist:cov}

\caption{ Performance of Algorithm \eqref{alg:DiSCO} with intermittent communication scheme on datasets with very different $\specnorm^2$. The algorithm works better for smaller $\specnorm^2$ and there is less decay in performance for \rcv~ as we decrease the number of communication rounds as opposed to \ctype~ ($\specnorm^2=0.01$ vs $\specnorm^2=0.21$).}
\label{fig:rcvcovInterm}
\end{figure*} 

Our goals in our experimental evaluation are to validate the theoretical dependence of the convergence rate on $\specnorm^2$ and to see if the conclusions hold when the assumptions we make in the analysis are violated. Note that all our experiments are based on simulations on a multicore computer.

\subsection{Data sets, tasks, and parameter settings} 

\begin{table}[t]
\centering
\caption{Data sets and parameters for experiments}
\begin{tabular}{l|c|c|c|c|c|}
data set & training  & test  & dim. & $\lambda$ & $\specnorm^2$ \\
\hline
\rcv & $781,265$ & $23,149$ & $47,236$ & $10^{-4}$ & $0.01$ \\
\ctype & $522,911$ & $58,001$ & $47,236$ & $10^{-6}$ & $0.21$ 
\end{tabular}
\label{tab:data}
\end{table}

The data sets used in our experiments are summarized in Table \eqref{tab:data}. \ctype \ is the forest covertype dataset~\cite{covertype} used in \cite{SSSC11:pegasos} obtained from the UC Irvine Machine Learning Repository~\cite{Lichman:2013}, and rcv1 is from the Reuters collection~\cite{Lichman:2013} obtained from libsvm collection \cite{ChangL:11libsvm}. The \rcv~ data set has a small value of $\hat{\rho}^2$, whereas \ctype \ has a larger value. \label{tab:data} In all the experiments we looked at $\ell_2$-regularized classification objectives for problem \eqref{eq:optForm}.   Each plot is averaged over $5$ runs.%

The data consists of pairs $\{(\x_1,y_1), \ldots, (\x_N,y_N)\}$ where $\x_i \in \mathbb{R}^d$ and $y_i \in \{-1,+1\}$. In all experiments we optimize the $\ell_2$-regularized empirical hinge loss where $\ell(\w^{\trans}\x ) = (1 - \w^{\trans} \x y)_{+}$.  %
The values of the regularization parameter $\reg$ are chosen from to be the same as those in Shalev-Shwarz et al.~\cite{SSSC11:pegasos}. 

We simulated networks of compute nodes of varying size ($m$) arranged in a $k$-regular graph with $k = \floor{0.25m}$ or a fixed degree (not dependent on $m$). Note that the dependence of the convergence rate of procedures like Algorithm \eqref{alg:DiSCO} on the properties of the underlying network has been investigated before and we refer the reader to Agarwal and Duchi~\cite{AgarwalD:11nips} for more details. In this paper we experiment only with $k$-regular graphs. The weights on the Markov matrix $\bP$ are set by using the max-degree Markov chain (see \citep{DiaconisBoyd}). One can also optimize for the fastest mixing Markov chain (\citep{DiaconisBoyd},\citep{DoubStoch::nips09}). Each node is randomly assigned $n=\floor{N/m}$ points.

\subsection{Intermittent Communication}

In this experiment we show the objective function for \rcv~ and \ctype~ as we change the frequency of communication (Figure \ref{fig:rcvcovInterm}), communicating after every $1,10,50$ and $500$ iterations. Indeed as predicted we see that the dataset with the larger $\rho^2$ appears to be affected more by intermittent communication. This indicates that network bandwidth can be conserved for datasets with a smaller $\rho^2$.

\begin{figure*}[t]
\centering
\scalebox{0.6}{
\includegraphics[width=\columnwidth]{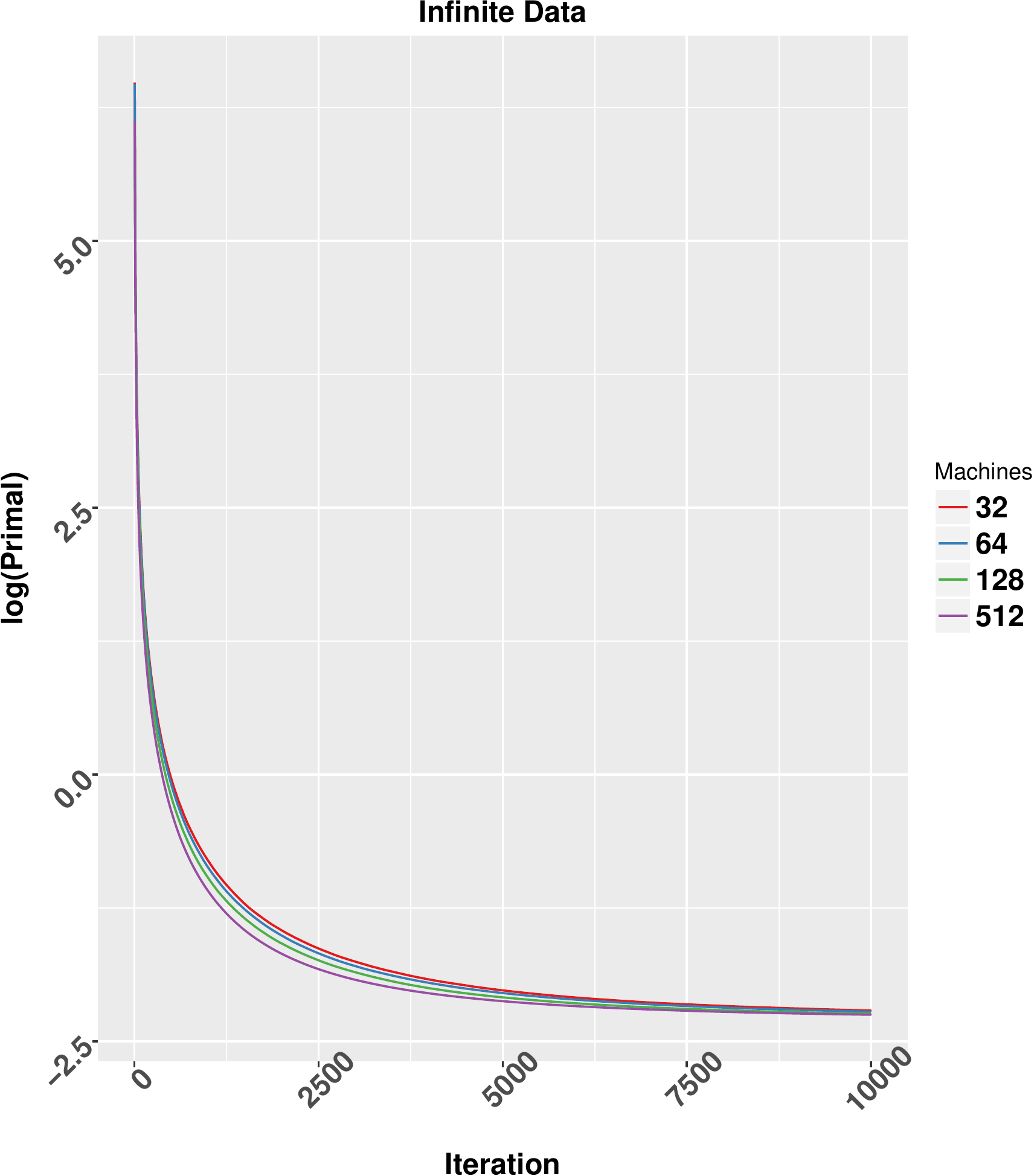}}
\label{fig:inifnite}
\caption{No network effect with increasing benefit of adding more machines in the case of infinite data.}
\label{fig:Infinite}
\end{figure*} 

\begin{figure*}
\centering
\scalebox{0.9}{
\includegraphics[width=3.8in]{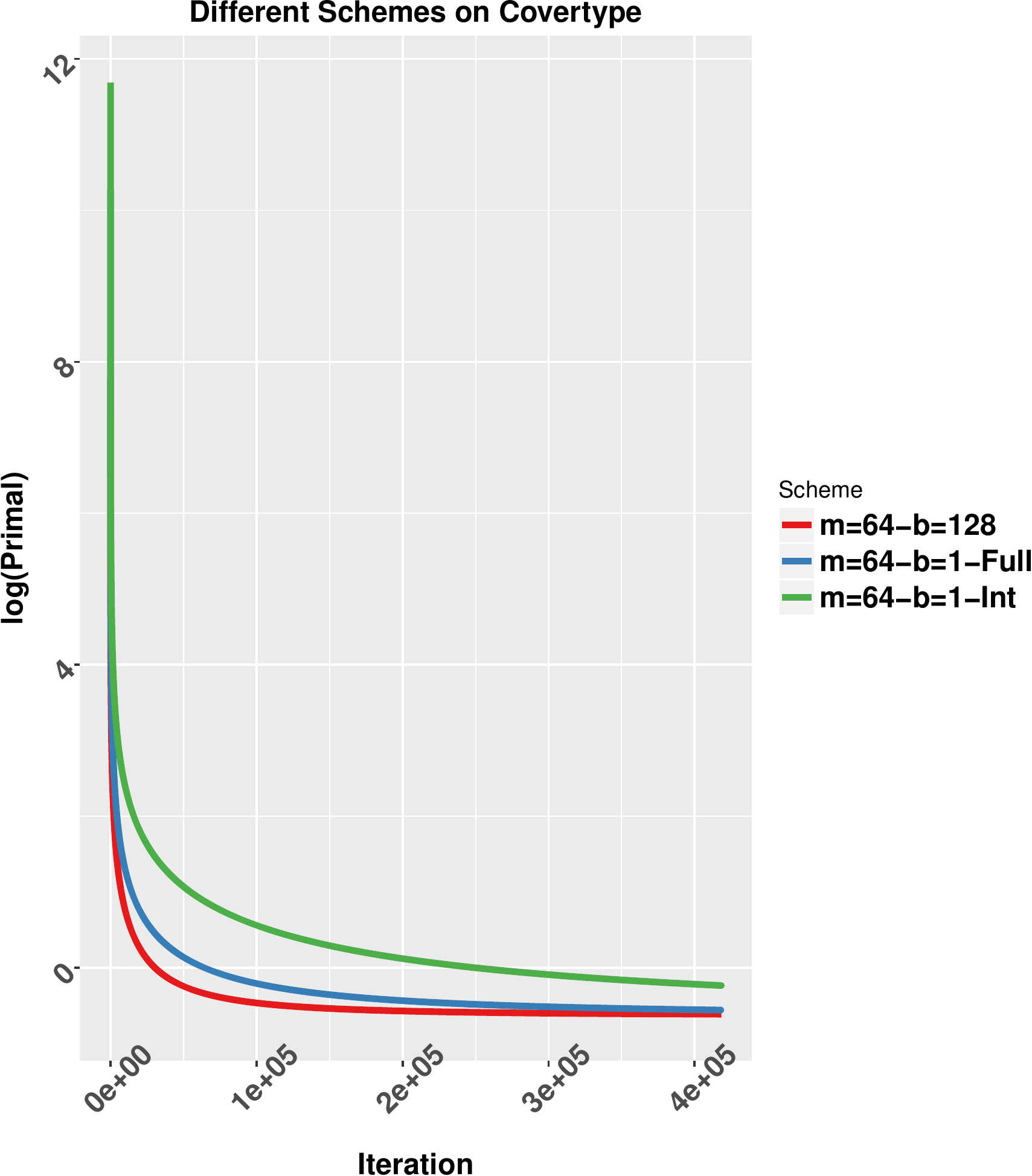}
\includegraphics[width=3.8in]{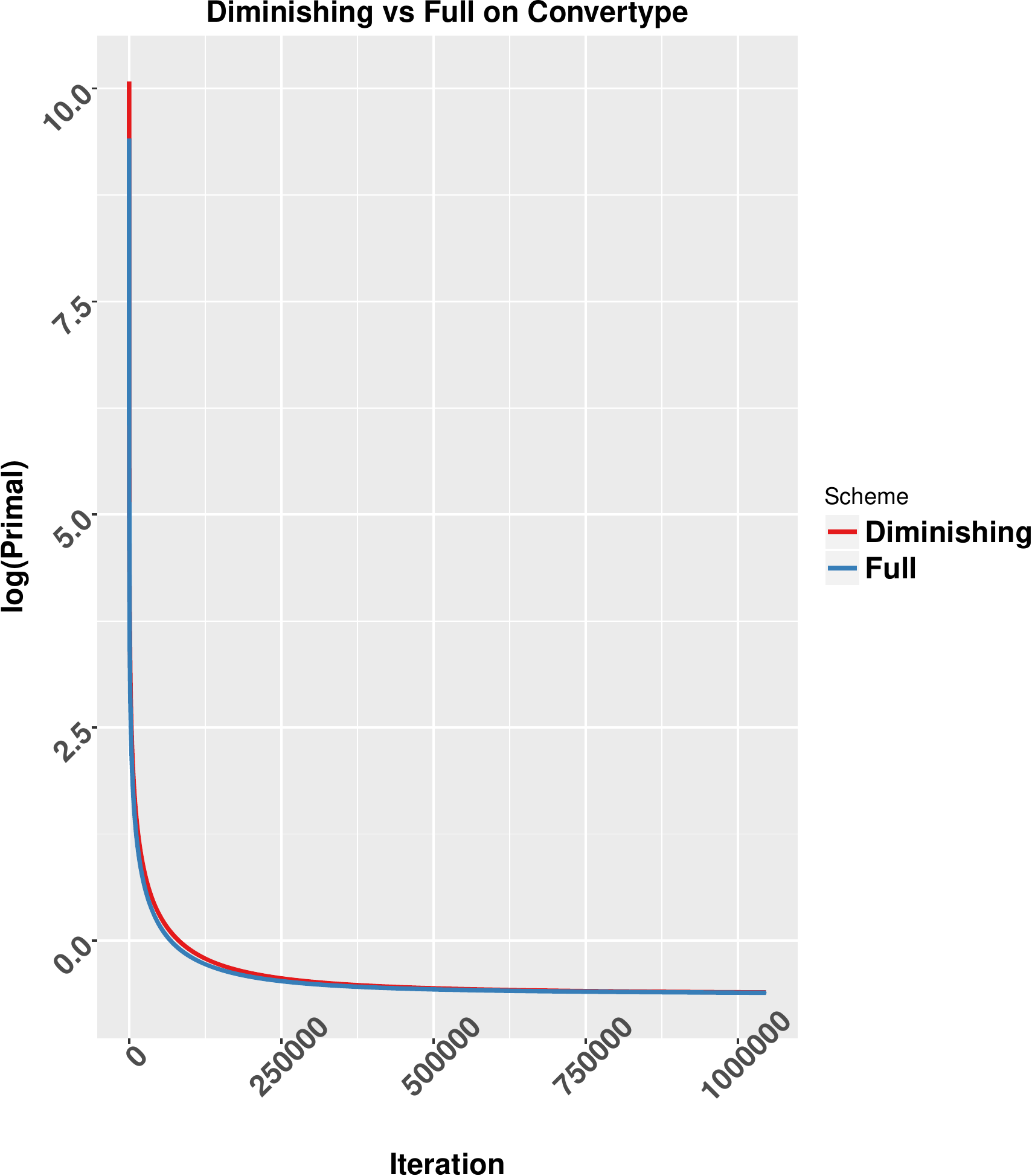}}
\caption{a) Comparison of three different schemes a) Algorithm \eqref{alg:DiSCO} with Mini-Batching b) Standard c) Intermittent with $b=(1/\nu)=128$. As predicted the mini-batch scheme performs much better than the others. b) The performance on Covertype with a full and a diminishing communication scheme is similar.}
\label{fig:rcvcovdiffschemes}
\end{figure*}
\subsection{Comparison of Different Schemes}
We compare the three different schemes proposed in this paper. On a network of $m=64$ machines we plot the performance of the mini batch extension of Algorithm \eqref{alg:DiSCO} with batch size $128$ against the intermittent scheme that communicates after every $128$ iterations and also the standard version of the algorithm. In Figure \ref{fig:rcvcovdiffschemes}-(a) we see that as predicted in Theorem \eqref{theorem:mainThrm:mbatch} the mini batch scheme proposed in \eqref{eq:mbatchComm} does better than the vanilla and the intermittent scheme.

\subsection{Infinite Data} 

To provide some empirical evidence of Lemma \ref{lemma:mainThrmLarge} we generate a very large ($N=10^7$) synthetic dataset from a multivariate Normal distribution and created a simple binary classification task using a random hyperplane. As we can see in figure \ref{fig:Infinite} for the SVM problem and a $k$-regular network we continue to gain as we add more machines and then eventually we stabilize but never lose from more machines. We only show the first few thousand iterations for clarity.
\removed{
\begin{figure*}[t]
\centering
\scalebox{0.7}{
\includegraphics[width=\columnwidth]{cov_error_dimvsfull-crop.pdf}}
\label{fig:fullvsDim}
\caption{The performance on Covertype with a full and a diminishing communication scheme is similar.}
\label{fig:Dimin}
\end{figure*}} 
\subsection{Diminishing Communication}

To test if our conclusions apply when the i.i.d assumption for the matrices $\bP(t)$ does not hold we simulate a diminishing communication regime. Such a scheme can be useful when the nodes are already close to the optimal solution and communicating their respective iterate is wasteful. Intuitively it is in the beginning the nodes should communicate more frequently. To formalize the intuition we propose the following communication model
\begin{align}
\bP(t) =
\left\{
	\begin{array}{ll}
		\bP  & \mbox{w.p. } Ct^{-p} \\
		\mbf{I} & \mbox{w.p. } 1-Ct^{-p}
	\end{array}
\right.
\label{eq:powerLaw}
\end{align}
where $C,p > 0$. Thus the sequence of matrices are not identically distributed and the conclusions of Theorem \eqref{theorem:mainThrmStoch} do not apply.

However in Figure \ref{fig:rcvcovdiffschemes}-(b) (C=1,p=0.5) we see that on a network of $m=128$ nodes the performance for the diminishing regime is similar to the full communication case and we can hypothesize that our results also hold for non i.i.d communication matrices.
\clearpage

\section{Discussion and Implications}
In this paper we described a consensus stochastic gradient descent algorithm and analyzed its performance in terms of the spectral norm $\specnorm^2$ of the data covariance matrix under a homogenous assumption. In the consensus problem this setting has not been analyzed before and existing work corresponds to weaker results when this assumption holds.

For certain strongly convex objectives we showed that the objective value gap between any node's iterate and the optimum centralized estimate decreases as $O(\log^2(T)/T)$; crucially, the constant depended on $\specnorm^2$ and the spectral gap of the network matrix. \removed{This dependence on $\specnorm^2$ also appears in the analysis of mini-batching~\citep{TakacBRS:13icml}.} We showed how limiting communication can improve the total runtime and reduce network costs by extending our analysis with a similar data dependent bound. Moreover we show that in the asymptotic regime the network penalty disappears.
Our analysis suggests that distribution-dependent bounds can help us understand how data properties can mediate the tradeoff between computation and communication in distributed optimization.  In a sense, data distributions with smaller $\specnorm^2$ are easier to optimize over in a distributed setting. This set of distributions includes sparse data sets, an important class for applications. 

In the future we will extend data dependent guarantees to serial algorithms as well as the \textit{average-at-end} scheme~\citep{ZhangDW:12,ShamirSrebroZhang:14icml}. Extending our fixed batch-size to random size can help us understand the benefit of communication-free iterations. Finally, we can also study the impact of asynchrony and more general time-varying topologies.

\section{Appendix}

We gather here the proof details and technical lemmas needed to establish our results.

\section{Proof of Theorem \ref{theorem:mainThrm}}

Theorem \ref{theorem:mainThrm} provides a bound on the suboptimality gap for the output $\hat{\w}_i(T)$ of Algorithm \ref{alg:DiSCO} at node $i$, which is the average of that node's iterates. In the analysis we relate this local average to the average iterate across nodes at time $t$:
	\begin{align}
	\bar{\w}(t)  = \sum_{i=1}^m \frac{\w_i(t)}{m}.
	\label{eq:node_avg_iterate}
	\end{align}
We will also consider the average of $\bar{\w}(t)$ over time. 

The proof consists of three main steps. 
\begin{itemize}
\item We establish the following inequality for the objective error:
	\begin{align}
	\E \left[ J(\bar{\w}(t))- J(\w^{*}) \right] &\le \notag \\
   	&\hspace{-1in}
	\frac{(\step_t^{-1}-\reg)}{2} \E \left[ \norm{\bar{\w}(t)-\w^{*}}^2 \right] \notag \\
   	&\hspace{-1in}
	- \frac{\step_t^{-1}}{2} \E\left[ \norm{\bar{\w}(t+1)-\w^{*}}^2 \right] 		
		\notag \\
	&\hspace{-1in} 
	+ \frac{\step_t}{2} \E\left[ \norm{\sum_{i=1}^{m}\frac{\g_i(t)}{m}}^2 
			\right] \notag \\ 
	&\hspace{-1in} 
	+\sum_{i=1}^{m} 
			\sqrt{\E\left[\norm{ \bar{\w}(t)-\w_i(t) }^2 \right]} \notag \\
	&\hspace{-1in}
	\cdot \sqrt{\E\left[ \left(\norm{\nabla J_i(\w_i(t))} 
				+ \norm{\nabla J_i(\bar{\w}(t))} \right)^2 \right] }/m,
	 \label{eq:mainBnd_before}
	 \end{align}
where $\bar{\w}(t)$ is the average of the iterates at all nodes and the expectation is with respect to $\calF_t$ while conditioned on the sample split across nodes. All expectations, except when explicitly stated, will be conditioned on this split.
 \item We bound $\E\left[\norm{\nabla J(\w_i(t))}^2\right] $ and $\frac{\step_t}{2}\E \left[ \norm{\sum_{i=1}^{m}\frac{\g_i(t)}{m}}^2 \right]$ in terms of the spectral norm of the covariance matrix of the distribution $\mc{P}$ by additionally taking expectation with respect to the sample $S$.
 \item We bound the network error $\E\left[\norm{ \bar{\w}(t)-\w_i(t) }^2 \right]$ in term of the network size $m$ and a spectral property of the matrix $\bP$.
\end{itemize}
Combining the bounds using inequality \eqref{eq:mainBnd_before} and applying the definition of subgradients yields the result of Theorem \ref{theorem:mainThrm}.

\subsection{Spectral Norm of Random Submatrices}
In this section we establish a Lemma pertaining to the spectral norm of submatrices that is central to our results. Specifically we prove the following inequality, which follows by applying the Matrix Bernstein inequality of Tropp~\cite{Tropp:12tail}.

\begin{lemma}\label{lem:specnormIntdim}
Let $\mc{P}$ be a distribution on $\mathbb{R}^d$ with second moment matrix $\mbs{\Sigma} = \E_{\Z \sim \mc{P}}[ \Z \Z^{\trans}]$ such that $\norm{\Z_k} \le 1$ almost surely.  Let $\dists^2 = \spec(\mbs{\Sigma})$.  Let $\Z_1, \Z_2, \ldots, \Z_K$ be an i.i.d. sample from $\mc{P}$ and let
	\[
	\Q_K = \sum_{k=1}^{K} \Z_k \Z_k^{\trans}
	\]
be the empirical second moment matrix of the data.  Then for $K > \frac{4}{3\dists^2} \log(d)$,
	\begin{align}
	\E\left[ \frac{ \spec( \Q_K ) }{K} \right] 
	&\le 5 \dists^2. 
	\end{align}
\end{lemma}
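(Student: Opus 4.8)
The plan is to apply the Matrix Bernstein inequality to the sum of i.i.d.\ centered random matrices $\X_k = \Z_k\Z_k^\trans - \mbs{\Sigma}$, convert the resulting tail bound on $\spec(\Q_K - K\mbs{\Sigma})$ into a bound on $\E[\spec(\Q_K)]$ by integrating the tail, and finally use the stated sample-size condition $K > \frac{4}{3\dists^2}\log d$ to collapse the various constants into the clean bound $5\dists^2$.

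First I would set up the Bernstein parameters. Since $\norm{\Z_k}\le 1$ a.s., each $\Z_k\Z_k^\trans$ is PSD with $\spec(\Z_k\Z_k^\trans)\le 1$, so the centered summands $\X_k$ satisfy $\spec(\X_k)\le 1$ deterministically (the eigenvalues of $\Z_k\Z_k^\trans - \mbs{\Sigma}$ lie in $[-\dists^2,1]$). For the variance parameter, $\E[\X_k^2] = \E[(\Z_k\Z_k^\trans)^2] - \mbs{\Sigma}^2 \preceq \E[\norm{\Z_k}^2 \Z_k\Z_k^\trans] \preceq \mbs{\Sigma}$, so $\spec\bigl(\sum_k \E[\X_k^2]\bigr)\le K\dists^2 =: v$. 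Matrix Bernstein (Tropp, \cite{Tropp:12tail}) then gives, for $u\ge 0$,
\[
\Prob\bigl[\spec(\Q_K - K\mbs{\Sigma}) \ge u\bigr] \le d\cdot \exp\!\left(\frac{-u^2/2}{K\dists^2 + u/3}\right).
\]
Because $\spec(\Q_K) \le K\dists^2 + \spec(\Q_K - K\mbs{\Sigma})$, it suffices to bound $\E[\spec(\Q_K - K\mbs{\Sigma})^+]$.

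Next I would integrate the tail: $\E[X^+] = \int_0^\infty \Prob[X\ge u]\,du$, split the exponent into the sub-Gaussian regime ($u \le 3K\dists^2$, where the denominator is $\le 2K\dists^2$) and the sub-exponential regime ($u > 3K\dists^2$), and bound each piece. Each integral contributes something of order $\sqrt{K\dists^2 \log d}$ plus $\log d$, up to constants; the condition $K \ge \frac{4}{3\dists^2}\log d$ is exactly what makes $\log d \le \frac{3}{4}K\dists^2$, so that $\sqrt{K\dists^2\log d}\le \frac{\sqrt 3}{2}K\dists^2$ and the lower-order $\log d$ term is also dominated by $K\dists^2$. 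Dividing through by $K$ and tallying the constants yields $\E[\spec(\Q_K)/K] \le \dists^2(1 + c)$ with $c$ small enough that the total is at most $5\dists^2$; the constant $5$ is deliberately loose to absorb the crude tail-integration bounds.

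The main obstacle — really the only delicate point — is the bookkeeping in the tail integral: one must be careful to use the piecewise form of the Bernstein exponent (the $u^2$ term dominates for small $u$, the linear $u$ term for large $u$) and to track how the $d$ in front interacts with the integration so that the final constant genuinely comes out below $5$ under the hypothesis on $K$. There is no conceptual difficulty beyond choosing the split point and being generous with constants; the hypothesis $K > \frac{4}{3\dists^2}\log d$ is precisely tuned so that all the $\log d$ contributions are subsumed into a constant multiple of $\dists^2$.
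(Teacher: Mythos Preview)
Your plan is correct and matches the paper's approach: center the rank-one summands, apply Tropp's Matrix Bernstein, and integrate the tail to control $\E[\spec(\Q_K)/K]$, using the hypothesis on $K$ to absorb the $\log d$ terms into a constant multiple of $\dists^2$.

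One simplification worth noting: the paper does not actually integrate the sub-Gaussian piece at all. Instead it writes $\E[\spec(\Q_K)/K] \le 3\dists^2 + \int_{2\dists^2}^\infty \Prob\bigl(\spec(\Q_K)/K - \dists^2 \ge r'\bigr)\,dr'$, bounding the probability trivially by $1$ on $[0,3\dists^2]$ and using only the sub-exponential branch $d\exp(-3Kr'/8)$ beyond. The remaining integral is $\frac{8d}{3K}\exp(-\tfrac{3}{4}K\dists^2)$, and the condition $K>\frac{4}{3\dists^2}\log d$ kills the factor of $d$, leaving at most $2\dists^2$. This avoids the more delicate bookkeeping you anticipate in splitting the integral; your route works too (and your variance bound $\spec(\sum_k\E[\X_k^2])\le K\dists^2$ is in fact slightly sharper than the paper's $2K\dists^2$), but the paper's shortcut gets to the constant $5$ with less effort.
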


Thus when $\mc{P}$ is the empirical distribution we get that $	\E\left[ \frac{ \spec( \Q_K ) }{K} \right] \le 5 \dists^2$.

\noindent \textit{Remark:} We can replace the ambient dimension $d$ in the requirement for $K$ by an intrinsic dimensionality term but this requires a lower bound on the norm of any data point in the sample. 

\begin{proof}
Let $\Z$ be the $d \times K$ matrix whose columns are $\{\Z_k\}$.  Define $\X_k = \Z_k \Z_k^{\trans} - \mbs{\Sigma}$.  Then $\E[ \X_k ] = \mbf{0}$ and
	\begin{align*}
	\lmax(\X_k) &= \lmax\left( \Z_k \Z_k^{\trans} - \mbs{\Sigma} \right) \\
	&\le \norm{ \Z_k }^2 \\
	&\le 1,
	\end{align*}
because $\mbs{\Sigma}$ is positive semidefinite and $\norm{ x_i } \le 1$ for all $i$.  Furthermore,
	\begin{align*}
	\spec\left( \sum_{k=1}^{K} \E\left[ \X_k^2 \right] \right) 
	&= K \spec\left(  
		\E\left[ \Z_k \Z_k^{\trans} \Z_k \Z_k^{\trans} \right] - \mbs{\Sigma}^2 \right) \\
	&\le K \spec\left( \E\left[ \norm{Y_k}^2 \Z_k \Z_k^{\trans} \right] \right) 
		+ K \spec\left( \mbs{\Sigma} \right)^2 \\
	&\le K (\dists^2 + \dists^4) \\
	&\le 2 K \dists^2,
	\end{align*}
since $\specnorm \le 1$.
	
Applying the Matrix Bernstein inequality of Tropp~\cite[Theorem 6.1]{Tropp:12tail}:
	\begin{align}
	\Prob\left( \spec\left( \sum_{k=1}^{K} \mbf{X}_k \right) \ge r \right) 
	\le %
		\begin{cases}
		d \exp\left( - 3 \frac{r^2}{ 16 K \dists^2} \right) & \frac{r}{K} \le 2 \dists^2 \\
		d \exp\left( - 3 \frac{r}{8} \right) & \frac{r}{K} \ge 2 \dists^2
		\end{cases}.
	\label{eq:bernstein1}
	\end{align}
Now, note that
	\begin{align*}
	\spec\left( \sum_{k=1}^{K} \X_k  \right) = \spec\left( \sum_{k=1}^{K} \Z_k \Z_k^{\trans} - K \mbs{\Sigma}  \right),
	\end{align*}
so $\spec\left( \sum_{k=1}^{K} \mbf{X}_k \right) \ge r$ is implied by 
	\[
	\left| \frac{1}{K} \spec\left( \sum_{k=1}^{K} \Z_k \Z_k^{\trans} \right) - \spec\left( \mbs{\Sigma}  \right) \right| \ge \frac{r}{K}.  
	\]
Therefore
	\begin{align}
	\Prob\left( \left| \frac{ \spec(\Q_K) }{ K } - \dists^2 \right| \ge r' \right)
	\le
		\begin{cases}
		d \exp\left( - \frac{3 K r'^2}{ 16 \dists^2} \right) & r' \le 2 \dists^2 \\
		d \exp\left( - \frac{3 K r'}{8} \right) & r' \ge 2 \dists^2
		\end{cases}.
	\label{eq:bernstein2}
	\end{align}
Integrating \eqref{eq:bernstein2} yields
	\begin{align*}
	&\E\left[ \frac{ \spec( \Q_K ) }{K} \right] = \notag \\
	&\int_{0}^{\infty} \Prob\left( \frac{ \spec(\Q_K) }{ K } \ge x \right) dx \\
	&\le 3 \dists^2 + \int_{3 \dists^2}^{\infty} \Prob\left( \frac{ \spec(\Q_t) }{ K } - \dists^2 \ge x - \dists^2 \right) dx \\
	&\le 3 \dists^2 + \int_{2 \dists^2}^{\infty} \Prob\left( \frac{ \spec(\Q_t) }{ K } - \dists^2 \ge r' \right) dr' \\
	&\le 3 \dists^2 + \int_{2 \dists^2}^{\infty} d \exp\left( - \frac{3}{8} K r' \right) dr' \\
	&= 3 \dists^2 + \frac{8}{3} \cdot \frac{d}{K} \exp\left( - \frac{3}{4} \dists^2 K \right)
	\end{align*}
For $K > \frac{4}{3 \dists^2} \log d$,
	\begin{align*}
	\E\left[ \frac{ \spec( \Q_K ) }{K} \right] 
	&\le 3 \dists^2 + \frac{8}{3} \cdot \frac{3}{4} \cdot \frac{\dists^2}{\log d} \\
	&\le 5 \dists^2.
	\end{align*}	
\end{proof}
\subsection{Decomposing the expected suboptimality gap}

The proof in part follows \citep{nedicDistributedOptimization}. It is easy to verify that because $\bP$ is doubly stochastic the average of the iterates across the nodes at time $t$, the average of the iterates across the nodes in \eqref{eq:node_avg_iterate} satisfies the following update rule:
\vspace{-2mm}
\begin{align}
\bar{\w}(t+1) = \bar{\w}(t) - \step_t\sum_{i=1}^{m}\frac{\g_i(t)}{m}.
\label{eq:avgUp}
\vspace{-2mm}
\end{align}
We emphasize that in Algorithm \ref{alg:DiSCO} we do not perform a final averaging across nodes at the end as in \eqref{eq:node_avg_iterate}. Rather, we analyze the average at a single node across its iterates (sometimes called Polyak averaging).  Analyzing \eqref{eq:node_avg_iterate} provides us with a way to understand how the objective $J(\w_i(t))$ evaluated at any node $i$'s iterate $\w_i(t)$ compares to the minimum value $J(\w^*)$.  The details can be found in Section \eqref{sec:combo_bound}.

To simplify notation, we treat all expectations as conditioned on the sample $S$. Then \eqref{eq:avgUp},
\begin{align}
\E\left[ \norm{\bar{\w}(t+1)-\w^{*}}^2 \Big| \calF_t \right] & \notag \\
&\hspace{-1.2in}
=\E \left[ \norm{\bar{\w}(t)-\w^{*}}^2 | \calF_t \right] \notag \\
	&\hspace{-0.8in} 
	+ \step_t^2\E \left[ \norm{\sum_{i=1}^{m}\frac{\g_i(t)}{m}}^2 \Big| \calF_t\right] 
	\notag \\
	&\hspace{-0.8in}
	- 2\step_t(\bar{\w}(t)-\w^{*})^{\trans}\sum_{i=1}^{m}\frac{\E \left[ \g_i(t)|\calF_t \right]}{m} 
	\notag \\
&\hspace{-1.2in}
=  \E \left[ \norm{\bar{\w}(t)-\w^{*}}^2 |\calF_t \right] \notag \\
	&\hspace{-0.8in} 
	+ \step_t^2\E \left[ \norm{\sum_{i=1}^{m}\frac{\g_i(t)}{m}}^2 \Big| \calF_t \right] \notag \\
	&\hspace{-0.8in} 
	- 2\step_t\sum_{i=1}^{m}(\bar{\w}(t)-\w^{*})^{\trans}\frac{\E \left[ \g_i(t)|\calF_t \right]}{m}. 
\label{eq:comm_recursion}
\end{align}

Note that $\nabla J_i(\w_i(t)) = \E \left[ \g_i(t)|\calF_t \right]$, so for the last term, for each $i$ we have
\begin{align}
\nabla J_i(\w_i(t))^{\trans}(\bar{\w}(t)-\w^{*}) \notag \\
&\hspace{-1.2in}
= 
\nabla J_i(\w_i(t))^{\trans}\left(\bar{\w}(t)-\w_i(t)\right) 
	\notag \\
	&\hspace{-0.8in}
	+ \nabla J_i(\w_i(t))^{\trans}\left(\w_i(t) - \w^{*}\right) 
	\notag \\
&\hspace{-1.2in} 
\ge 
-\norm{\nabla J_i(\w_i(t))} \norm{\bar{\w}(t)-\w_i(t)} 	
	\notag \\
	&\hspace{-0.8in}
	+ \nabla J_i(\w_i(t))^{\trans}\left(\w_i(t) - \w^{*}\right) 
	\notag \\
&\hspace{-1.2in}
\ge
-\norm{\nabla J_i(\w_i(t))} \norm{\bar{\w}(t)-\w_i(t)} 
	\notag \\
	&\hspace{-0.8in}
	+ J_i(\w_i(t)) - J_i(\w^*)  
	+ \frac{\reg}{2}\norm{\w_i(t)-\w^{*}}^2
	\notag \\
&\hspace{-1.2in}
= 
-\norm{\nabla J_i(\w_i(t))} \norm{\bar{\w}(t)-\w_i(t)} 
	\notag \\
	&\hspace{-0.8in}
	+ J_i(\w_i(t)) - J_i(\bar{\w}(t)) 
	\notag \\
  	&\hspace{-0.8in} 
	+ \frac{\reg}{2}\norm{\w_i(t)-\w^{*}}^2 
	+  J_i(\bar{\w}(t)) - J_i(\w^*) 
	\notag \\
&\hspace{-1.2in}
\ge 
	-\norm{\nabla J_i(\w_i(t))} \norm{\bar{\w}(t)-\w_i(t)} 
	\notag \\
	&\hspace{-0.8in}
	+\nabla J_i(\bar{\w}(t))^{\trans}\left(\w_i(t) - \bar{\w}(t)\right) 
	\notag \\
  	&\hspace{-0.8in}
	+ \frac{\reg}{2}\norm{\w_i(t)-\w^{*}}^2 
	+  J_i(\bar{\w}(t)) - J_i(\w^*) 
	\notag \\
&\hspace{-1.2in}
\ge 
	-\left(\norm{\nabla J_i(\w_i(t))}
		+\norm{\nabla J_i(\bar{\w}(t))}\right) \norm{\bar{\w}(t)-\w_i(t)} 
	\notag \\
 	&\hspace{-0.8in}
	+ \frac{\reg}{2}\norm{\w_i(t)-\w^{*}}^2 
	+ J_i(\bar{\w}(t)) - J_i(\w^*),
\end{align}
where the second and third lines comes from applying the Cauchy-Shwartz inequality and strong convexity, the fifth line comes from the definition of subgradient, and the last line is another application of the Cauchy-Shwartz inequality.

Averaging over all the nodes, using convexity of $\norm{\cdot}^2$, the definition of $J(\cdot)$, and Jensen's inequality yields the following inequality:
\begin{align}
-2 \step_t \sum_{i=1}^{m} (\bar{\w}(t)-\w^{*})^{\trans} \frac{\E[\g_i(t)|\calF_t]}{m} 	
	\notag \\
&\hspace{-2in}
\le 
	2\step_t  
	\sum_{i=1}^{m}
		\frac{
			\norm{ \bar{\w}(t)-\w_i(t) } 
			\left (\norm{\nabla J_i(\w_i(t))} 
				+ \norm{\nabla J_i(\bar{\w}(t))} \right )
			}{m}
	\notag \\
	&\hspace{-1.7in}
	-2\step_t
	\left( \sum_{i=1}^m \frac{J_i(\bar{\w}(t))- J_i(\w^{*})}{m} \right) 
	\notag \\
	&\hspace{-1.7in}
	-\reg\step_t \sum_{i=1}^{m}\frac{\norm{\w_i(t)-\w^{*}}^2}{m} 
	\notag \\
&\hspace{-2in}
\le
	2\step_t 
	\sum_{i=1}^{m}\frac{\norm{ \bar{\w}(t)-\w_i(t) } \left (\norm{\nabla J_i(\w_i(t))} + \norm{\nabla J_i(\bar{\w}(t))} \right )}{m}
	\notag \\
	&\hspace{-1.7in}
	-2\step_t\left( J(\bar{\w}(t))- J(\w^{*}) \right) - \reg\step_t \norm{\bar{\w}(t)-\w^{*}}^2
\label{eq:node_average}
\end{align}

Substituting inequality \eqref{eq:node_average} in recursion \eqref{eq:comm_recursion}, 
	\begin{align}
	\E\left[ \norm{\bar{\w}(t+1)-\w^{*}}^2 \big| \calF_t \right] 
	\notag \\
	&\hspace{-1.6in}
	\le
		\E \left[ \norm{\bar{\w}(t)-\w^{*}}^2 |\calF_t \right] 
		\notag \\
  		&\hspace{-1.5in}
		+ \step_t^2 \E\left[ \norm{\sum_{i=1}^{m}\frac{\g_i(t)}{m}}^2 
			~\Big|~ \calF_t\right] 
		\notag \\
		&\hspace{-1.5in} 
		+ 2 \step_t
		\sum_{i=1}^{m}
			\frac{
				\norm{ \bar{\w}(t)-\w_i(t) } 
				\left(\norm{\nabla J_i(\w_i(t))} 
					+ \norm{\nabla J_i(\bar{\w}(t))}
				\right)
			}{m} 
		\notag \\
		&\hspace{-1.5in} 
 		-2 \step_t \left( J(\bar{\w}(t))- J(\w^{*}) \right) 
		- \reg \step_t \norm{\bar{\w}(t)-\w^{*}}^2.
	\label{eq:recurse1}
	\end{align}
Taking expectations with respect to the entire history $\mc{F}_t$,
\begin{align}
\E\left[ \norm{\bar{\w}(t+1)-\w^{*}}^2 \right]  
	\notag \\
&\hspace{-1.3in}
\le
	\E\left[ \norm{\bar{\w}(t)-\w^{*}}^2 \right] 
	 + \step_t^2\E \left[ \norm{\sum_{i=1}^{m}\frac{\g_i(t)}{m}}^2 \right] 	
	\notag \\ 
	&\hspace{-1.2in}
	+ 2\step_t \cdot 
	\notag \\
	&\hspace{-1.1in}
		\sum_{i=1}^{m} \frac{
			\E\left[\norm{ \bar{\w}(t)-\w_i(t) } 
				\left(\norm{\nabla J_i(\w_i(t))} 
				+ \norm{\nabla J_i(\bar{\w}(t))} \right)
				\right]
			}{m} 
	\notag \\ 
	&\hspace{-1.2in}
	-2\step_t \left( \E\left[ J(\bar{\w}(t))- J(\w^{*}) \right] \right)
			- \reg\step_t \E \left[ \norm{\bar{\w}(t)-\w^{*}}^2 \right] 
	\notag \\ 
&\hspace{-1.3in}
\le 
	-2 \step_t \left( \E \left[ J(\bar{\w}(t))- J(\w^{*}) \right] \right)
	\notag \\ 
	&\hspace{-1.2in}
	+ (1 - \reg \step_t) \E\left[ \norm{\bar{\w}(t)-\w^{*}}^2 \right] 
	+ \step_t^2\E \left[ \norm{\sum_{i=1}^{m}\frac{\g_i(t)}{m}}^2 \right] 
	\notag \\ 
	&\hspace{-1.2in}
	+ \frac{2\step_t}{m} 
			 \sum_{i=1}^{m} 
			\sqrt{\E\left[\norm{ \bar{\w}(t)-\w_i(t) }^2 \right]} 
			\notag \\ 
			&\hspace{-0.6in}
			\cdot \sqrt{\E\left[ \left(\norm{\nabla J_i(\w_i(t))} 
				+ \norm{\nabla J_i(\bar{\w}(t))} \right)^2 \right] }
\end{align}

This lets us bound the expected suboptimality gap $\E \left[ J(\bar{\w}(t))- J(\w^{*}) \right]$ via three terms:
	\begin{align}
	\text{T1}
	&= \frac{(\step_t^{-1}-\reg)}{2}
			\E \left[ \norm{\bar{\w}(t)-\w^{*}}^2 \right]
		\notag \\
		&\hspace{0.5in} 
		- \frac{\step_t^{-1}}{2}
			\E\left[ \norm{\bar{\w}(t+1)-\w^{*}}^2 \right]
	\label{eq:T1}
	\\
	\text{T2}
	&=
		\frac{\step_t}{2}\E \left[ \norm{\sum_{i=1}^{m}\frac{\g_i(t)}{m}}^2 \right]
	\label{eq:T2}
	\\
	\text{T3}
	&= \frac{1}{m} \sum_{i=1}^{m} 
			\sqrt{\E\left[\norm{ \bar{\w}(t)-\w_i(t) }^2 \right]} 
		\notag \\
		&\hspace{0.5in} 
		\cdot 
		\sqrt{\E\left[ \left (\norm{\nabla J_i(\w_i(t))} 
				+ \norm{\nabla J_i(\bar{\w}(t))} \right)^2 \right] },
	\label{eq:T3}
	\end{align}
where
\vspace{-5mm}
\begin{align}
\E \left[ J(\bar{\w}(t))- J(\w^{*}) \right]  
&\le \text{T1} + \text{T2} + \text{T3}.
 \label{eq:mainBnd}
 \end{align}
 The remainder of the proof is to bound these three terms separately.
 \vspace{-2mm}
 
\subsection{Network Error Bound} 
We need to prove an intermediate bound first to handle term \text{T3}.

\begin{lemma}\label{lemma:avgdevBnd}
Fix a Markov matrix $\bP$ and consider Algorithm \ref{alg:DiSCO} when the objective $J(\w)$ is strongly convex we have the following inequality for the expected squared error between the iterate $\w_i(t)$ at node $i$ at time $t$ and the average $\bar{\w}(t)$ defined in Algorithm \ref{alg:DiSCO}:
	\begin{align}
	\sqrt{\E\left[\norm{ \bar{\w}(t)-\w_i(t) }^2 \right]} \le \frac{2L}{\reg}\cdot \frac{\sqrt{m}}{b }\cdot\frac{\log(2bet^2)}{t},
	\end{align}
where $b= (1/2)\log(1/\lambda_2(\bP))$.
\end{lemma}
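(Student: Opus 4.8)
The plan is to bound the deviation $\bar{\w}(t) - \w_i(t)$ by unrolling the coupled update rules for the individual iterates and their average. Recall from \eqref{eq:updRule} that $\w_i(t+1) = \sum_j \w_j(t) P_{ij} - \step_t \g_i(t)$, and from \eqref{eq:avgUp} that $\bar\w(t+1) = \bar\w(t) - \step_t \sum_j \g_j(t)/m$. Writing these in stacked vector form over all nodes, the consensus step acts by $\bP \otimes I$ and the averaging step acts by $(\tfrac{1}{m}\mathbf{1}\mathbf{1}^\trans) \otimes I$; the key point is that the difference between node $i$'s iterate and the mean is governed by the action of $\bP$ restricted to the subspace orthogonal to $\mathbf{1}$, whose operator norm is exactly $\lambda_2(\bP)$ (here $\bP$ is symmetric/doubly stochastic so $\lambda_2 = \sigma_2$). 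Since the iterates are initialized at $\mathbf{0}$ (hence equal across nodes at $t=1$), unrolling the recursion expresses $\bar\w(t) - \w_i(t)$ as a weighted sum $\sum_{s=1}^{t-1} \step_s \big(\text{terms of the form } (\bP^{t-s} - \tfrac1m\mathbf{1}\mathbf{1}^\trans)_{ij}\big) \g_j(s)$, so that
\begin{align}
\norm{\bar\w(t) - \w_i(t)} \le \sum_{s=1}^{t-1} \step_s \, \lambda_2(\bP)^{\,t-s} \cdot C\sqrt{m} \cdot \max_{j,s}\norm{\g_j(s)},
\end{align}
for an appropriate constant, using $\norm{\bP^{t-s} - \tfrac1m\mathbf{1}\mathbf{1}^\trans}_{2} \le \lambda_2(\bP)^{t-s}$ and converting the row-wise bound to a Euclidean one (this is where the $\sqrt m$ enters).

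\textbf{Second, I would bound the stochastic gradients uniformly.} Each $\g_i(t) \in \partial\ell(\w_i(t)^\trans \x_{i,t})\x_{i,t} + \reg\w_i(t)$; the Lipschitz assumption gives $\norm{\partial\ell(\cdot)\x_{i,t}} \le L$ since $\norm{\x_{i,t}} \le 1$, but the $\reg\w_i(t)$ term needs a bound on $\norm{\w_i(t)}$. The standard trick here — used in the Pegasos-style analysis with $\step_t = 1/(\reg t)$ — is that the iterates stay in a ball of radius $O(L/\reg)$; one shows by induction that $\norm{\w_i(t)} \le L/\reg$ (the consensus/averaging step is a convex combination so it does not increase the max norm, and the gradient step with step $1/(\reg t)$ keeps the iterate in the ball). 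This yields $\norm{\g_i(t)} \le 2L$ uniformly, which after taking expectations gives $\sqrt{\E\norm{\g_i(t)}^2} \le 2L$.

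\textbf{Third, I would combine these.} Plugging $\step_s = 1/(\reg s)$ and $\norm{\g_j(s)} \le 2L$ into the unrolled bound gives
\begin{align}
\sqrt{\E\big[\norm{\bar\w(t) - \w_i(t)}^2\big]} \le \frac{2L\sqrt m}{\reg} \sum_{s=1}^{t-1} \frac{\lambda_2(\bP)^{t-s}}{s}.
\end{align}
The remaining task is the deterministic estimate $\sum_{s=1}^{t-1} \lambda_2^{t-s}/s \le \frac{1}{b}\cdot\frac{\log(2bet^2)}{t}$ with $b = \tfrac12\log(1/\lambda_2)$, i.e. $\lambda_2 = e^{-2b}$. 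One splits the sum at $s = t/2$: for $s > t/2$ the $1/s \le 2/t$ factor pulls out and $\sum \lambda_2^{t-s}$ is a geometric series bounded by $1/(1-\lambda_2) \le 1/b$ (roughly), giving the $\frac1{bt}$ scale; for $s \le t/2$ the geometric factor $\lambda_2^{t-s} \le \lambda_2^{t/2} = e^{-bt}$ is tiny and, multiplied by $\sum 1/s \le \log t$, contributes a lower-order term absorbed into the logarithm. Carefully tracking constants produces the stated $\log(2bet^2)$ numerator.

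\textbf{The main obstacle} I anticipate is the deterministic geometric-harmonic sum estimate with the precise constant $\log(2bet^2)/(bt)$ — getting the split-point argument to yield exactly this form, rather than a cruder $\frac{\log t}{t(1-\lambda_2)}$ bound, requires care in how the two regimes are balanced and in bounding $1/(1-\lambda_2)$ in terms of $1/b = 2/\log(1/\lambda_2)$ (valid since $1 - \lambda_2 \ge \tfrac12\log(1/\lambda_2)$ fails in general — in fact $1-\lambda_2 \le \log(1/\lambda_2)$, so one must be careful about the direction of this inequality and may instead keep $b$ throughout). The other delicate point is justifying the uniform norm bound on the iterates when consensus and gradient steps are interleaved; the convexity of the averaging step makes this go through, but it should be stated as an explicit induction.
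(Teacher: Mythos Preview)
Your high-level structure---unroll the recursion from the common initialization, bound the gradient norms, then control the geometric--harmonic sum $\sum_{s}\lambda_2^{t-s}/s$---matches the paper exactly. The differences are in the technical execution of two steps.

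\textbf{The sum estimate.} This is where you diverge most from the paper, and you correctly flag it as the crux. The paper does \emph{not} split the sum at $t/2$. Instead, after reindexing to $\sum_{\tau=1}^{t}\frac{a^{\tau}}{t-\tau+1}$ with $a=\sqrt{\lambda_2}$ and $b=-\log a$, it uses the elementary inequality $e^{-b\tau}<\frac{1}{1+b\tau}$ to replace the geometric factor by a rational one, and then bounds the resulting sum by the integral
\[
\int_{1}^{t}\frac{d\tau}{(1+b\tau)(t-\tau+1)},
\]
which is computed exactly via partial fractions and yields $\frac{\log(2bet^2)}{bt}$ directly. Your splitting argument is perfectly valid and gives the same order $\frac{\log t}{bt}$, but with different constants (roughly $\frac{2(1+b)+(2/e)\log t}{bt}$ rather than $\frac{\log(2be)+2\log t}{bt}$); you would not recover the exact numerator $\log(2bet^2)$ without the integral trick. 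So if you want the stated constant, adopt the paper's device; if you only need the order, your approach is simpler.

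\textbf{The gradient bound.} The paper does not do your Pegasos-style induction on $\norm{\w_i(t)}$; it simply writes $\norm{\g_j(s)}\le L$ and proceeds, obtaining $\frac{L}{\reg s}\norm{\tfrac{1}{m}\mathbf{1}-(\bP^{t-s})_i}_1$ for the summands plus a separate $\frac{2L}{\reg t}$ boundary term (the final factor of $2$ in the lemma comes from absorbing that boundary term, not from doubling the gradient bound). Your more careful treatment of the $\reg\w_i(t)$ contribution is arguably cleaner, but be aware that the induction you sketch is not immediate: the consensus step and the shrinkage act on different vectors ($\sum_j P_{ij}\w_j(t)$ versus $\w_i(t)$), so the usual single-node Pegasos contraction $(1-1/t)\norm{\w(t)}$ does not appear verbatim and needs a bit more work than ``convex combinations don't increase norms.''

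\textbf{Minor point.} The paper controls the mixing term via the $\ell_1$ row deviation $\norm{\tfrac{1}{m}\mathbf{1}-(\bP^{t-s})_i}_1\le\sqrt{m}\,(\sqrt{\lambda_2})^{t-s}$ (citing a standard Markov-chain mixing bound), which is why $a=\sqrt{\lambda_2}$ and hence $b=\tfrac12\log(1/\lambda_2)$ appear. Your operator-norm route with $\lambda_2^{t-s}$ is tighter for symmetric $\bP$ and would in fact allow $b=\log(1/\lambda_2)$, but to match the lemma as stated you should follow the $\ell_1$/$\sqrt{\lambda_2}$ path.
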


\begin{proof}
We follow a similar analysis as others \citep[Prop. 3]{nedicDistributedOptimization}~\citep[IV.A]{dualAveraging}~\citep{DistStronglyConvex}.  Let $\mbf{W}(t)$ be the $m \times d$ matrix whose $i$-th row is $\w_i(t)$ and $\mbf{G}(t)$ be the $m \times d$ matrix whose $i$-th row is $\g_i(t)$ .  Then the iteration can be compactly written as
	\[
	\mbf{W}(t+1) = \bP(t) \mbf{W}(t) - \step_t \mbf{G}(t)
	\]
and the network average matrix $\bar{\mbf{W}}(t) = \frac{1}{m} \mbf{1} \mbf{1}^{\top} \mbf{W}(t)$.  Then we can write the difference using the fact that $\bP(t) = \bP$ for all $t$:
	\begin{align}
	&\bar{\mbf{W}}(t+1) - \mbf{W}(t+1) =\notag \\
		&\left(  \frac{1}{m} \mbf{1} \mbf{1}^{\top} - I \right) \left( \bP \mbf{W}(t) - \step_t \mbf{G}(t) \right) \notag \\
			&\hspace{-1in} \notag \\
		&= \left(  \frac{1}{m} \mbf{1} \mbf{1}^{\top} - \bP \right) \mbf{W}(t)
			- \step_t \left(  \frac{1}{m} \mbf{1} \mbf{1}^{\top} - I \right) \mbf{G}(t) \notag \\
		&\hspace{-1in} \notag \\
		&= \left(  \frac{1}{m} \mbf{1} \mbf{1}^{\top} - \bP \right) \left( \bP \mbf{W}(t-1) - \step_{t-1} \mbf{G}(t-1) \right)
			\notag \\
			&\quad - \step_t \left(  \frac{1}{m} \mbf{1} \mbf{1}^{\top} - I \right) \mbf{G}(t) \notag \\
				&\hspace{-1in} \notag \\
		&= \left(  \frac{1}{m} \mbf{1} \mbf{1}^{\top} - \bP^2 \right) \mbf{W}(t-1) \notag \\
			&\quad- \step_{t-1} \left(  \frac{1}{m} \mbf{1} \mbf{1}^{\top} - \bP \right) \mbf{G}(t-1)
			\notag \\
			&\quad- \step_t \left(  \frac{1}{m} \mbf{1} \mbf{1}^{\top} - I \right) \mbf{G}(t) \notag \\
				&\hspace{-1in} \notag \\
		&= \left(  \frac{1}{m} \mbf{1} \mbf{1}^{\top} - \bP^2 \right) \mbf{W}(t-1)\notag \\
		&\quad- \sum_{s=t-1}^{t} \step_s \left(  \frac{1}{m} \mbf{1} \mbf{1}^{\top} - \bP^{t-s} \right) \mbf{G}(s).
	\end{align}
Continuing the expansion and using the fact that $\mbf{W}(1) = \mbf{0}$,
	\begin{align}
   &	\bar{\mbf{W}}(t+1) - \mbf{W}(t+1) = \notag \\
	&\left( \frac{1}{m} \mbf{1} \mbf{1}^{\top} - \bP^t \right)  \mbf{W}(1)
		- \sum_{s=1}^{t} \step_s \left(  \frac{1}{m} \mbf{1} \mbf{1}^{\top} - \bP^{t-s} \right) \mbf{G}(s) \notag \\
		&\quad= - \sum_{s=1}^{t} \step_s \left(  \frac{1}{m} \mbf{1} \mbf{1}^{\top} - \bP^{t-s} \right) \mbf{G}(s) \notag \\
		&\quad= - \sum_{s=1}^{t-1} \step_s \left(  \frac{1}{m} \mbf{1} \mbf{1}^{\top} - \bP^{t-s} \right) \mbf{G}(s) \notag \\
		&\quad- \step_t \left(  \frac{1}{m} \mbf{1} \mbf{1}^{\top} - I \right) \mbf{G}(t).
			\label{eq:more:neterr_matrix}
	\end{align}
Now looking at the norm of the $i$-th row of \eqref{eq:more:neterr_matrix} and using the bound on the gradient norm:
	\begin{align}
	&\norm{ \bar{\w}(t)-\w_i(t) } \notag \\
		&\quad \le \Bigg\| 
			\sum_{s=1}^{t-1} \step_s \sum_{j=1}^{m} 
				\left( \frac{1}{m} - (\bP^{t-s})_{ij} \right) \g_j(s) 
			\notag \\
			&\hspace{1in}
			+ \step_t \left( \sum_{j=1}^{m} \frac{1}{m} \g_j(t) - \g_i(t) \right) 
				\Bigg\| \\
		&\le \sum_{s=1}^{t-1} \frac{L}{\reg s} \cdot \norm{ \frac{1}{m} - (\bP^{t-s})_{i} }_1 + \frac{2L}{\reg t}.
		\label{eq:devBnd}
	\end{align}
	
We handle the term $\norm{ \frac{1}{m} - (\bP^{t-s})_{i} }_1$ using a bound on the mixing rate of Markov chains (c.f. (74) in Tsianos and Rabbat~\cite{DistStronglyConvex}):
	\begin{align}
	\sum_{s=1}^{t-1} \frac{L}{\reg s} \cdot \norm{ \frac{1}{m} - (\bP^{t-s})_{i} }_1 
		&\le \frac{L \sqrt{m}}{\reg}  \sum_{s=1}^{t-1} \left( \sqrt{ \lambda_2(\bP)} \right)^{t - s} \frac{1}{s}.
	\label{eq:mixingrate}
	\end{align}

Define $a = \sqrt{\lambda_2(\bP)} \le 1$ and $b = - \log(a) > 0$.  Then we have the following identities:
	\begin{align}
	\sum_{\tau=1}^t \frac{a^{t-\tau+1}}{\tau} 
	= \sum_{\tau=1}^t \frac{a^\tau}{t-\tau+1}
	=  \sum_{\tau=1}^t \frac{\exp(-b\tau)}{t-\tau+1}.
	\end{align}
Now using the fact that when $x > -1$ we have $\exp(-x)< 1/(1+x)$ and using the integral upper bound we get
	\begin{align}
   &	\sum_{\tau=1}^t \frac{a^{t-\tau+1}}{\tau} \notag \\
   	&\le
	\sum_{\tau=1}^t \frac{1}{(1+b\tau)(t-\tau+1)} \notag \\ 
	&\le  
	\frac{1}{(1+b)t} + \int_{1}^{t} \frac{d\tau}{(1+b\tau)(t-\tau+1)} \notag \\
 	&=  
	\frac{1}{(1+b)t} + \left[ \frac{\log(b\tau+1)
		-\log(t-\tau+1)}{bt+b+1} \right]_{\tau=1}^{t} 
		\notag \\
 	&=  
	\frac{1}{(1+b)t} + \frac{\log(bt+1)-\log(b+1)+\log(t)}{bt+b+1} \notag \\
 	&\le   
	\frac{\log(et(bt+1))}{bt} \notag \\
 	&\le   \frac{\log(2bet^2)}{bt}.
	\label{eq:seriesBnd}
	\end{align}
Using \eqref{eq:mixingrate} and \eqref{eq:seriesBnd} in \eqref{eq:devBnd} we get
	\begin{align}
	\norm{\bar{\w}(t)-\w_i(t)} 
	&\le \frac{L\sqrt{m}}{\reg}\frac{\log(2bet^2)}{bt} + \frac{2L}{ \reg t} 
	\notag \\
	&\le \frac{2L\sqrt{m}}{\reg}\frac{\log(2bet^2)}{bt}.
	\label{eq:networkDevBnd}
	\end{align}
Therefore we have %
	\begin{align}
	\sqrt{\E\left[\norm{ \bar{\w}(t)-\w_i(t) }^2 \right]}
		&\le \frac{2L\sqrt{m}}{\reg}\frac{\log(2bet^2)}{bt}.
	\label{eq:avgtoind:normbnd1}
\end{align}

\subsection{Bounds for expected gradient norms \label{subsec:gradBnds}}

\subsubsection{Bounding $\E\left[\norm{\nabla J_i(\bar{\w}(t))}^2\right]$}

Let $\sgi{j}{t} \in \partial \ell(\bar{\w}(t)^{\trans}\x_{i,j})$ denote a subgradient for the $j$-th point at node $i$ and $\SGI{t} = (\sgi{1}{t}, \sgi{2}{t}, \ldots, \sgi{n}{t})^{\trans}$ be the vector of subgradients at time $t$.  Let $\Q_{S_i}$ be the $n \times n$ Gram matrix of the data set $S_i$.  From the definition of $\norm{\nabla J_i(\bar{\w}(t))}$ and using the Lipschitz property of the loss functions, we have the following bound:
	\begin{align}
   	&\norm{\nabla J_i(\bar{\w}(t))}^2 
	\notag \\
	&\le \norm{\sum_{j \in S_i }\frac{\sgi{j}{t}\x_{i,j}}{n} + \reg \bar{\w}(t)}^2 
		\notag \\
	&\le
	2 \norm{\sum_{j\in S_i } \frac{\sgi{j}{t} \x_{i,j}}{n}}^2 
		+ 2\reg^2 \norm{\bar{\w}(t)}^2 
		\notag \\
	&= \frac{ 2 \sum_{j \in S_i }\sum_{j^{\prime}\in S_i } 
		\sgi{j}{t} \sgi{j^{\prime}}{t} \x_{i,j}^{\trans} \x_{i,j}^{\prime}  
		}{n^2} 
		+ 2\reg^2 \norm{\bar{\w}(t)}^2 \notag \\
&= \frac{ 2 }{n^2} \SGI{t}^{\trans} \Q_{S_i} \SGI{t}  + 2\reg^2 \norm{\bar{\w}(t)}^2 \notag \\
&\le \frac{2}{n^2} \norm{ \SGI{t} }^2 \spec(\Q_{S_i}) + 2\reg^2 \norm{\bar{\w}(t)}^2 \notag \\
&\le  2L^2 \frac{\spec(\Q_{S_i})}{n} + 2\reg^2 \norm{\bar{\w}(t)}^2.
\label{eq:basicGradBnd}
\end{align}

We rewrite the update \eqref{eq:avgUp} in terms of $\{ \x_{i,t} \}$, the points sampled at the nodes at time $t$:
	\begin{align}
	\bar{\w}(t+1) 
	= \bar{\w}(t)(1-\reg \step_t) 
		- \step_t \sum_{i=1}^{m} 
			\frac{\partial \ell(\w_i(t)^{\trans}\x_{i,t})\x_{i,t}}{m}.
	\label{eq:avgUprecurse}
	\end{align}
Now from equation \eqref{eq:avgUprecurse}, after unrolling the recursion as in Shalev-Shwarz et al.~\cite{SSSC11:pegasos} we see
	\begin{align}
	\bar{\w}(t) 
	= \frac{1}{\reg(t-1)} \sum_{\tau=1}^{t-1} 
		\frac{
		\sum_{i=1}^{m}\partial \ell(\w_i(\tau)^{\trans}\x_{i,\tau})\x_{i,\tau}
		}{m}.
\end{align}
Let $\gamma^i_{\tau} \in \partial \ell(\w_i(\tau)^{\trans}\x_{i,\tau})$ the subgradient set for the $i$th node computed at time $\tau$, then we have
	\begin{align}
	\norm{\bar{\w}(t)} 
	\le 
	\frac{1}{\reg(t-1)} \cdot \frac{1}{m} \sum_{i=1}^{m}  
		\norm{\sum_{\tau=1}^{t-1}\gamma^i_{\tau} \x_{i,\tau}}.
\label{eq:avg:normbnd1}
\end{align}

Let us in turn bound for each node $i$ the term $\norm{\sum_{\tau=1}^{t-1}\gamma^i_{\tau} \x_{i,\tau}}$. Let $\sgt{\tau}^{i} \in \partial \ell(\w_i(\tau)^{\trans}\x_{i,{\tau}})$ denote a subgradient for the point sampled at time $\tau$ at node $i$ and $\SGT^{i} = (\sgt{1}^{i}, \sgt{2}^{i}, \ldots, \sgt{t-1}^{i})^{\trans}$ be the vector of subgradients up to time $t-1$.  We have
	\begin{align}
	\norm{\sum_{\tau=1}^{t-1}\gamma^i_{\tau} \x_{i,\tau}}^2 
	&= \sum_{\tau,\tau^{\prime}} 
		\gamma^i_{\tau} \gamma^i_{\tau^{\prime}} 
		\x_{i,\tau}^{\trans}\x_{i,\tau^{\prime}} 
		\notag \\
	&= ( \mbs{\gamma}^{i} )^{\trans} \Q_{i,t-1} \mbs{\gamma}^{i} 
		\notag \\
	&\le \norm{\mbs{\gamma}^{i}}^2 \spec(\Q_{i,t-1}) 
		\notag \\
	&\le (t-1) L^2\spec(\Q_{i,t-1}),
\end{align}
where $\Q_{i,t-1}$ is the $(t-1) \times (t-1)$ Gram submatrix  corresponding to the points sampled at the $i$-th node until time $t-1$.

Further bounding \eqref{eq:avg:normbnd1}:
	\begin{align*}
	\norm{\bar{\w}(t)}^2 
	&\le \left(\frac{1}{\reg(t-1)} 
		\frac{\sum_{i=1}^{m} \sqrt{(t - 1) L^2\spec(\Q_{i,t-1})}}{m}\right)^2  
		\notag \\
	&\le \frac{L^2}{\reg^2} \left( \frac{1}{m} 
		\sum_{i=1}^{m} \sqrt{ \frac{\spec(\Q_{i,t-1})}{t-1} }
		\right)^2.
	\end{align*}
Since as stated before everything is conditioned on the sample split we take expectations w.r.t the history and the random split and using the Cauchy-Schwarz inequality again, and the fact that the points are sampled i.i.d. from the same distribution,
	\begin{align}
	&\E\left[ \norm{\bar{\w}(t)}^2 \right] 
	\notag \\
	&\le 
	\frac{L^2}{\reg^2} \frac{1}{m^2} \sum_{i=1}^{m} \sum_{j=1}^{m}
		\E\left[  \frac{ \sqrt{\spec(\Q_{i,t-1})\spec(\Q_{j,t-1}) } }{t-1}
			\right] \notag \\
	&\le \frac{L^2}{\reg^2} \frac{1}{m^2}
		\sum_{i=1}^{m} \sum_{j=1}^{m}
		\sqrt{ \E\left[ \frac{ \spec(\Q_{i,t-1})  }{t-1} \right]
			\E\left[ \frac{ \spec(\Q_{j,t-1}) }{t-1} \right] } \notag \\
	&= \frac{L^2}{\reg^2} \E\left[ \frac{ \spec(\Q_{i,t-1})  }{t-1} \right] .
	\label{eq:avg:normbnd2}
	\end{align}
The last line follows from the expectation over the sampling model: the data at node $i$ and node $j$ have the same expected covariance since they are sampled uniformly at random from the total data.

Taking the expectation in \eqref{eq:basicGradBnd} and substituting \eqref{eq:avg:normbnd2} we have
	\begin{align}
   	\E\left[\norm{\nabla J_i(\bar{\w}(t))}^2\right] 
	&\le 
		2L^2 \E\left[\frac{\spec(\Q_{S_i})}{n}\right] 
		\notag \\
		&\hspace{0.5in}
		+ 2L^2 \E\left[\frac{\spec(\Q_{i,t-1})}{t-1} \right].
	\end{align}
Since $S_i$ is a uniform random draw from $S$ and by assuming both $t$ and $n$ to be greater than $4/(3\specnorm^2)\log (d)$, applying Lemma \ref{lem:specnormIntdim} gives us
	\begin{align}
	\E\left[\norm{\nabla J_i(\bar{\w}(t))}^2\right] \le 20L^2\specnorm^2.
	\label{eq:itravgnrmBnd}
	\end{align}

\subsubsection{Bounding $\E\left[\norm{\nabla J_i(\w_i(t))}^2\right]$}

We have just as in the previous subsection
\begin{align*}
\norm{\nabla J_i(\w_i(t))}^2 &\le  2L^2 \frac{\spec(\Q_{S_i})}{n} + 2\reg^2 \norm{\w_i(t)}^2.
\end{align*}
Using the triangle inequality, the fact that $(a_1+a_2)^2 \le 2a_1^2 + 2a_2^2$, the bounds \eqref{eq:networkDevBnd} and \eqref{eq:avg:normbnd2}, and Lemma \ref{lem:specnormIntdim}:
\begin{align}
\E\left[\norm{\w_i(t)}^2\right] &\le 2\E\left[\norm{\w_i(t) - \bar{\w}(t)}^2\right]  + 2\E\left[\norm{\bar{\w}(t)}^2\right] \notag \\
&\le  \frac{8L^2m}{\reg^2}\frac{\log^2(2bet^2)}{b^2 (t-1)^2} + \frac{5L^2\specnorm^2}{\reg^2}.
\label{eq:netBnd}
\end{align}
Since the second term does not scale with $t$, from \eqref{eq:netBnd} we can infer that for the second term to dominate the first we require
\begin{align*}
\frac{t}{\log(t)} > \sqrt{ \frac{8}{5} } \frac{\sqrt{m}}{\specnorm b}.
\end{align*}
This gives us
\begin{align}
\E\left[\norm{\w_i(t)}^2\right] &\le \frac{10L^2\specnorm^2}{\reg^2},
	\label{eq:localiter:norm}
\end{align}
and therefore
\begin{align}
\E\left[ \norm{\nabla J_i(\w_i(t))}^2 \right] \le 30L^2\specnorm^2.
\label{eq:itrnrmBnd}
\end{align}

\subsection{Bound for $T2$ }

Because the gradients are bounded,
\begin{align*}
&\E \left[ \norm{\sum_{i=1}^{m}\frac{\g_i(t)}{m}}^2 \right] 
\notag \\
&=\E \left[\sum_{i,j}\frac{\g_i(t)^{\trans}\g_i(t)}{m^2} \right] \notag \\
&=\sum_{i=1}^m \frac{\E \left[ \norm{\g_i(t)}^2 \right]}{m^2} + \sum_{i\ne j}\frac{\E \left[\g_i(t)^{\trans}\g_j(t)\right]}{m^2}  \\
&\le \frac{L^2}{m} +  \sum_{i\ne j}\frac{\E \left[\g_i(t)^{\trans}\g_j(t)\right]}{m^2}  \\
&= \frac{L^2}{m} +\frac{\sum_{i \ne j} \E_{\calF_{t-1}} \left[ \E\left[\g_i(t)^{\trans}\g_j(t) |  \calF_{t-1} \right] \right]}{m^2}.
\end{align*}
Now using the fact that the gradients $\g_i(t)$ are unbiased estimates of $\nabla J_i(\w_t)$ and that $\g_i(t)$ and $\g_j(t)$ are independent given past history and inequality \eqref{eq:itrnrmBnd} for node $i$ and $j$ we get
\begin{align}
&\frac{\sum_{i \ne j} \E_{\calF_{t-1}} \left[ \E\left[\g_i(t)^{\trans}\g_j(t) |  \calF_{t-1} \right] \right]}{m^2} 
\notag \\ 
&=\sum_{i \ne j} \frac{\E_{\calF_{t-1}} \left[\nabla J_i(\w_i(t))^{\trans}\nabla J_j(\w_j(t))  \right]}{m^2} \notag \\
&\le \sum_{i \ne j} \frac{\sqrt{\E_{\calF_{t-1}} \left[\norm{\nabla J_i(\w_i(t))}^2\right]}\sqrt{\E_{\calF_{t-1}} \left[\norm{\nabla J_j(\w_j(t))}^2\right]}}{m^2} \notag \\
&
= \frac{(m-1)}{m} \cdot 30L^2\specnorm^2\notag \\
&
\le 30L^2\specnorm^2.
\end{align}
Therefore to bound the term $\mathrm{T2}$ in \eqref{eq:mainBnd} we can use
\begin{align}
\E \left[ \norm{\sum_{i=1}^{m}\frac{\g_i(t)}{m}}^2 \right] 
	\le \frac{L^2}{m} + 30L^2\specnorm^2.
\label{eq:T2Bnd}
\end{align}

\subsection{Bound for $T3$ }

Applying \eqref{eq:avgtoind:normbnd1}, \eqref{eq:itravgnrmBnd}, and \eqref{eq:itrnrmBnd} to $\mathrm{T3}$ in \eqref{eq:mainBnd}, as well as Lemma \ref{lemma:avgdevBnd} and the fact that $(a_1+a_2)^2\le 2a_1^2 + 2a_2^2$ we obtain the following bound:
\begin{align}
T3 &\le \frac{1}{m} \sum_{i=1}^{m} 
	\sqrt{\E\left[\norm{ \bar{\w}(t)-\w_i(t) }^2 \right]} 
	\notag \\
	&\hspace{0.5in}
	\cdot \sqrt{\E\left[ \left(\norm{\nabla J_i(\w_i(t))} 
				+ \norm{\nabla J_i(\bar{\w}(t))} \right)^2 \right] }
	\notag \\
 &\le \frac{1}{m} \sum_{i=1}^{m} \frac{2L\sqrt{m}}{\mu}\frac{\log(2bet^2)}{bt} \cdot 10L \specnorm \notag \\
 &\le   \frac{20L^2}{\mu} \cdot \frac{\sqrt{m}}{b} \cdot \frac{\log(T)}{t} \cdot \specnorm.
\label{eq:T3Bnd}
\end{align}

\subsection{Combining the Bounds \label{sec:combo_bound}}

Finally combining \eqref{eq:T2Bnd} and \eqref{eq:T3Bnd} in \eqref{eq:mainBnd} and applying the step size assumption $\step_t = 1/(\reg t)$:
\begin{align}
&\E \left[ J(\bar{\w}(t))- J(\w^{*}) \right]  
\notag \\
&\le 
	\frac{(\step_t^{-1}-\reg)}{2}\E \left[ \norm{\bar{\w}(t)-\w^{*}}^2 \right] 
	\notag \\
	&\qquad 
	- \frac{\step_t^{-1}}{2}\E\left[ \norm{\bar{\w}(t+1)-\w^{*}}^2 \right]
	\notag \\
	&\qquad 
	+  \left(\frac{30L^2\specnorm^2}{\reg} + \frac{L^2}{\reg m}\right)\cdot \frac{1}	{t}\notag \\
	&\qquad 
	+  \frac{20L^2}{\reg} \cdot \frac{\sqrt{m}}{b} \cdot \frac{\log(2bet^2)}{t} \cdot \specnorm 
\notag \\
&\le 
	\frac{\reg(t-1)}{2}\E \left[ \norm{\bar{\w}(t)-\w^{*}}^2 \right]  
	\notag \\
	&\qquad
	- \frac{\reg t}{2}\E\left[ \norm{\bar{\w}(t+1)-\w^{*}}^2 \right] 
	+ K_0 \cdot \frac{L^2}{\reg t},
 \label{eq:mainBnd2}
 \end{align}
where $K_0 =  \left(30 \specnorm^2 + 1/m + \left(60 \cdot \sqrt{m\specnorm^2} \cdot \log(T)\right)/b \right)$, using $t\le T$ and assuming $T>2be$.

Let us now define two new sequences, the average of the average of iterates over nodes from $t=1$ to $T$ and the average for any node $i\in [m]$ 
\begin{align}
\hat{\w}(T) &= \frac{1}{T}  \sum_{t=1}^T \bar{\w}(t) \\
\hat{\w}_i(T) &= \frac{1}{T} \sum_{t=1}^T \w_i(t).
\end{align} 
Then summing \eqref{eq:mainBnd2} from $t=1$ to $T$, using the convexity of $J$ and collapsing the telescoping sum in the first two terms of \eqref{eq:mainBnd2},
	\begin{align}
&\E \left[ J(\hat{\w}(T))- J(\w^{*}) \right]  \notag \\
&\le \frac{1}{T} \sum_{t=1}^T \E \left[ J(\bar{\w}(t))- J(\w^{*}) \right] 
	 \notag \\
&\le - \frac{\reg T}{2} \E\left[ \norm{\bar{\w}(T+1)-\w^{*}}^2 \right] + K_0 \cdot \frac{L^2}{\reg} \cdot \frac{\sum_{t=1}^{T} 1/t}{T}  	\notag \\
&\le  K_0 \cdot \frac{L^2}{\reg} \cdot \frac{\log(T)}{T}.
\label{eq:avgofavgBnd} 
\end{align}

Now using the definition of subgradient, Cauchy-Schwarz, and Jensen's inequality we have 
\begin{align}
	&J(\hat{\w}_i(T)) -J(\w^{*}) \notag \\
   &\le	J(\hat{\w}(T))-J(\w^{*}) + \nabla J(\hat{\w}_i(T))^{\trans}(\hat{\w}_i(t) - \hat{\w}(T)) 
	\notag \\
	&\le
	J(\hat{\w}(T))-J(\w^{*}) + \norm{\nabla J(\hat{\w}_i(T)}\norm{\hat{\w}_i(t) - \hat{\w}(T)} 
	\notag \\
	&\le
	J(\hat{\w}(T))-J(\w^{*})  
	\notag \\
	&\hspace{0.5in}
	+ \norm{\nabla J(\hat{\w}_i(T))} \cdot \sum_{t=1}^T \frac{\norm{\w_i(t)-\bar{\w}(t)}}{T}.
	\label{eq:combine:bound1}
\end{align}
To proceed we must bound $\E\left[ \norm{\nabla J(\hat{\w}_i(T))}^2 \right]$ in a similar way as the bound \eqref{eq:itravgnrmBnd}.  First, let $\alpha_{i} = \partial \ell( \hat{\w}_i(T)^{\trans} \x_i)$ denote the subgradient for the $i$-th loss function of $J(\cdot)$ in \eqref{eq:optForm}, evaluated at $\hat{\w}_i(T)$, and $\mbs{\alpha}_T = (\alpha_1, \alpha_2, \ldots, \alpha_N)^{\trans}$ be the vector of subgradients.  As before,
	\begin{align*}
	\norm{ \nabla J(\hat{\w}_i(T)) }^2
	&= \norm{ \frac{1}{N} \sum_{i=1}^{N} \alpha_i \x_i + \reg \hat{\w}_i(T) }^2 \\
	&\le \frac{2}{N^2} \mbs{\alpha}^{\trans} \Q \mbs{\alpha} + 2 \reg^2 \norm{ \hat{\w}_i(T) }^2 \\
	&\le 10 L^2 \specnorm^2 + 2 \reg^2 \norm{ \hat{\w}_i(T) }^2 \\
	&\le 10 L^2 \specnorm^2 + 2 \reg^2 \frac{1}{T} \sum_{t=1}^{T} \norm{ \w_i(t) }^2. 
	\end{align*}
Taking expectations of both sides and using \eqref{eq:localiter:norm} as before:
	\begin{align*}
	\E\left[ \norm{ \nabla J(\hat{\w}_i(T)) }^2 \right]
	&\le 30 L^2 \specnorm^2 .
	\end{align*}
Taking expectations of both sides of \eqref{eq:combine:bound1} and using the Cauchy-Schwarz inequality, \eqref{eq:avgofavgBnd}, the preceding gradient bound, Lemma \ref{lemma:avgdevBnd} and the definition of $K_0$ we get
	\begin{align}
	&\E\left[ J(\hat{\w}_i(T)) -J(\w^{*})  \right] \notag \\
	&\le
	K_0 \cdot \frac{ L^2}{\reg} \cdot \frac{\log (T)}{T}
		+ \frac{2 \sqrt{30} L^2}{\reg} \cdot \frac{\sqrt{m}}{b} \cdot \specnorm \cdot \frac{\log(T)}{T} \cdot \sum_{t=1}^T\frac{1}{t}	\notag \\
	&\le \left(K_0 + \frac{2\sqrt{30}\cdot \sqrt{m\specnorm^2} \cdot \log T}{b}  \right)\cdot \frac{\log T}{T} \notag \\
	&\le \left(30\specnorm^2 + \frac{1}{m} + \frac{70\sqrt{m\specnorm^2}\cdot \log T}{b} \right) 
	\cdot \frac{L^2}{\reg} \cdot \frac{\log T}{T}.
	\end{align}
Recalling that $b=\log(1/\lambda_2(\bP)) \ge 1-\lambda_2(P)$, assuming $T> 2be$ and subsuming the first term in the third and taking expectations with respect to the sample split the above bound can be written as
\begin{align}
\E\left[ J(\hat{\w}_i(T)) -J(\w^{*})  \right] 
&\le \left(\frac{1}{m} + \frac{100\sqrt{m\specnorm^2}\cdot \log T}{1-\lambda_2(P)} \right)
\notag \\
&\hspace{0.5in}
\cdot \frac{L^2}{\reg} \cdot \frac{\log T}{T}.
\end{align}
\end{proof}

\section{Proof of Lemma \ref{lemma:avgdevBndStoch}}

\begin{proof}
Let us define the product of the sequence of random matrices $\{\bP(\tau) : s \le \tau \le t \}$:
\begin{align}
\Phi(s:t) = \bP(t) \cdots \bP(s).
\end{align}
Then proceeding as in proof of Lemma \ref{lemma:avgdevBnd} and using the step size $\step_t = 1/(\reg t)$, we get
\begin{align}
	\norm{ \bar{\w}(t)-\w_i(t) } 
	&\le
		\Bigg\| 
			\sum_{s=1}^{t-1} \step_s \sum_{j=1}^{m} 
				\left( \frac{1}{m} -  \Phi(s:t)_{ij} \right) \g_j(s) 
			\notag \\
			&\hspace{0.4in}
			+ \step_t \left( \sum_{j=1}^{m} \frac{1}{m} \g_j(t) - \g_i(t) \right)  
			\Bigg\| \\
	&\le \sum_{s=1}^{t-1} \frac{L}{\reg s} \cdot \norm{ \frac{1}{m} -  \Phi(s:t)\mathbf{e}_i }_1 + \frac{2L}{\mu t}.
		\label{eq:devBnd2}
	\end{align}

Let $\mathbf{e}_i$ be a vector with $0$'s everywhere except at the the $i$th position, then
\begin{align}
\E \left[\norm{\frac{\mathsf{1}}{m} - \Phi(s:t)\mathbf{e}_i}_1 \right] &\leq \sqrt{m} \E \left[\norm{\frac{\mathsf{1}}{m} - \Phi(s:t)\mathbf{e}_i}_2 \right].
\end{align}
Consider the recursion $\mathbf{u}(t+1) = \bP(t)\mathbf{u}(t)$ and let $\mathbf{v}(t+1) = \bP(t)\mathbf{u}(t) - \frac{\mathsf{1}}{m}$ then we have
\begin{align}
\E\left[\mathbf{v}(t+1)^{\trans}\mathbf{v}(t+1) | \mathbf{v}(t)\right] &= \E\left[ \mathbf{v}(t)^{\trans}\bP^2(t)\mathbf{v}(t) | \mathbf{v}(t)\right] \notag \\
&=  \mathbf{v}(t)^{\trans}\E\left[\bP^2(t)\right]\mathbf{v}(t) \notag \\ 
&\leq \norm{\mathbf{v}(t)}^2\lambda_2\left(\E\left[\bP^2(t)\right]\right),
\end{align}
since $\mathbf{v}(t)$ is orthogonal to the largest eigenvector of $\bP(t)$.

Taking expectations w.r.t to $\mathbf{v}(t)$ we get
\begin{align}
\E\left[\norm{\mathbf{v}(t+1)}^2\right] &\leq \E\left[\norm{\mathbf{v}(t)}^2\right]\lambda_2\left(\E\left[\bP^2(t)\right]\right).
\label{eq:errrecursion}
\end{align}
Recursively expanding \eqref{eq:errrecursion} we obtain
\begin{align}
\E\left[\norm{\mathbf{v}(t+1)}^2\right] &\leq \norm{\mathbf{v}(0)}^2\lambda_2 \left(\E\left[\bP^2(t)\right]\right)^{t-s+1}.
\end{align}
Consider an initial vector $\mathbf{u}(0)=\mathbf{e}_i$.  We see that $\norm{\mathbf{v}(t+1)}^2 = \norm{\frac{\mathsf{1}}{m} - \Phi(s:t)_i}_2$, this finally gives us
\begin{align}
&\E \left[\norm{\frac{\mathsf{1}}{m} - \Phi(s:t)_i}_1 \right] 
\notag \\
&\leq 
\sqrt{m} \E \left[\norm{\frac{\mathsf{1}}{m} - \Phi(s:t)_i}_2 \right]  \notag \\
&\leq\sqrt{m} \norm{\mathbf{e}_i - \frac{\mathsf{1}}{m}}^2 \lambda_2\left(\E\left[\bP^2(t)\right]\right)^{t-s+1} \notag \\
&\leq \sqrt{m} \lambda_2\left(\E\left[\bP^2(t)\right]\right)^{t-s+1}.
\end{align}
Proceeding like the proof of Lemma \ref{lemma:avgdevBndStoch} where $a=\lambda_2\left(\E\left[\bP^2(t)\right]\right)$ and $b=-\log(a)$ we get 
	\begin{align}
	\sqrt{\E\left[\norm{ \bar{\w}(t)-\w_i(t) }^2 \right]}
		&\le \frac{2L\sqrt{m}}{\reg}\frac{\log(2bet^2)}{bt}.
\end{align}

\end{proof}

\section{Proof of Theorem \ref{theorem:mainThrmStoch}}

The proof follows easily from the proof of Theorem \ref{theorem:mainThrm}.

\begin{proof}
Since \eqref{eq:mainBnd} still holds, we merely apply Lemma \ref{lemma:avgdevBndStoch} in \eqref{eq:mainBnd} and continue in the same way as the proof of Theorem \ref{theorem:mainThrm}.
\end{proof}

\section{Proof of Theorem \ref{theorem:mainThrm:mbatch}}

We will first establish the network lemma for scheme \eqref{eq:mbatchComm}.

\begin{lemma}\label{lemma:avgdevBndmbatch}
Fix a Markov matrix $\bP$ and consider Algorithm \ref{alg:DiSCO} when the objective $J(\w)$ is strongly convex and the frequency of communication satisfies
    \begin{align}
	1/\nu > \frac{4}{3\specnorm^2} \log(d) 
	\end{align}
we have the following inequality for the expected squared error between the iterate $\w_i(t)$ at node $i$ at time $t$ and the average $\bar{\w}(t)$ defined in Algorithm \ref{alg:DiSCO} for scheme \eqref{eq:mbatchComm}
	\begin{align}
	\sqrt{\E\left[\norm{ \bar{\w}(t)-\w_i(t) }^2 \right]} 
	\leq \frac{4L\sqrt{5m \specnorm^2}}{\reg}\cdot \frac{\log(2bet^2)}{bt}
	\end{align}
where $b = (1/2)\log(1/\lambda_2(\bP)) $.
\end{lemma}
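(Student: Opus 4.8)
The plan is to replay the proof of Lemma~\ref{lemma:avgdevBnd} almost verbatim, the only change being that the per-iteration increment is now the rescaled aggregated mini-batch subgradient $\nu\g^{1/\nu}_i(t)$ instead of a single stochastic subgradient. First I would write scheme \eqref{eq:mbatchComm} in compact form $\mbf{W}(t+1)=\bP\,\mbf{W}(t)-\step_t\,\tilde{\mbf{G}}(t)$, where $\mbf{W}(t)$ has $i$-th row $\w_i(t)$ and $\tilde{\mbf{G}}(t)$ has $i$-th row $\nu\g^{1/\nu}_i(t)$, and unroll the recursion exactly as in \eqref{eq:more:neterr_matrix}, using $\mbf{W}(1)=\mbf{0}$ and the fact that $\bP$ is doubly stochastic (so $\tfrac1m\mbf{1}\mbf{1}^{\top}\bP^{k}=\tfrac1m\mbf{1}\mbf{1}^{\top}$). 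This gives, for the $i$-th row of $\bar{\mbf{W}}(t+1)-\mbf{W}(t+1)$, the expression $-\sum_{s=1}^{t-1}\step_s\sum_{j=1}^m\big(\tfrac1m-(\bP^{t-s})_{ij}\big)\nu\g^{1/\nu}_j(s)-\step_t\big(\sum_{j=1}^m\tfrac1m\,\nu\g^{1/\nu}_j(t)-\nu\g^{1/\nu}_i(t)\big)$, i.e.\ the analogue of \eqref{eq:devBnd} with every $\g_j(s)$ replaced by $\nu\g^{1/\nu}_j(s)$.

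The one genuinely new ingredient is a second-moment bound on $\nu\g^{1/\nu}_j(s)$ in terms of $\specnorm^2$, replacing the crude $\norm{\g_j(s)}\le L$ used in Lemma~\ref{lemma:avgdevBnd} (which, applied to the aggregate by the triangle inequality, would only give $\norm{\nu\g^{1/\nu}_j(s)}\le L$ and hence no $\specnorm$ gain). Following the Gram-matrix computation in \eqref{eq:basicGradBnd}, write the loss part of $\nu\g^{1/\nu}_j(s)$ as $\nu\sum_{k\in\mc{I}_s}\beta_k\x_{j,k}$ with $\lvert\beta_k\rvert\le L$; its squared norm equals $\nu^2\mbs{\beta}^{\top}\Q_{\mc{I}_s}\mbs{\beta}\le\nu^2\norm{\mbs{\beta}}^2\spec(\Q_{\mc{I}_s})\le\nu L^2\spec(\Q_{\mc{I}_s})$ since $\lvert\mc{I}_s\rvert=1/\nu$. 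Taking expectations and invoking Lemma~\ref{lem:specnormIntdim} with $K=1/\nu$ — which is legitimate precisely because the hypothesis $1/\nu>\tfrac{4}{3\specnorm^2}\log d$ is that lemma's requirement — gives $\E[\spec(\Q_{\mc{I}_s})]\le 5\specnorm^2/\nu$, so the loss part has expected squared norm at most $5L^2\specnorm^2$. The regularization piece $\reg\w_j(s)$ carried inside $\g^{1/\nu}_j(s)$ is handled exactly as in the base analysis (it is lower order since $\reg\step_t=1/t\to 0$, or one bounds $\E[\norm{\w_j(s)}^2]$ via \eqref{eq:localiter:norm}); the upshot is an effective root-mean-square gradient bound that is a constant multiple of $L\specnorm$ in place of $L$, and collecting these constants produces the factor $4\sqrt5$ in the statement.

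It then remains to run the two estimates of Lemma~\ref{lemma:avgdevBnd} with this change. Taking $\sqrt{\E[\cdot]}$ of the squared norm of the $i$-th row, Minkowski's inequality splits the sum over $s$ (and the trailing term), and Cauchy--Schwarz with weights $\lvert\tfrac1m-(\bP^{t-s})_{ij}\rvert$ across the nodes $j$ yields $\sqrt{\E[\,\|\sum_j(\tfrac1m-(\bP^{t-s})_{ij})\,\nu\g^{1/\nu}_j(s)\|^2\,]}\le\norm{\tfrac1m-(\bP^{t-s})_i}_1\cdot c\,L\specnorm$ for an absolute constant $c$. Then the mixing-rate bound \eqref{eq:mixingrate}, $\norm{\tfrac1m-(\bP^{t-s})_i}_1\le\sqrt m\,(\sqrt{\lambda_2(\bP)})^{t-s}$, and the series bound \eqref{eq:seriesBnd}, $\sum_{s=1}^{t-1}\tfrac{(\sqrt{\lambda_2(\bP)})^{t-s}}{s}\le\tfrac{\log(2bet^2)}{bt}$ with $b=\tfrac12\log(1/\lambda_2(\bP))$, together with the trailing $\mc{O}(L\specnorm/(\reg t))$ term (folded into the main one using $\sqrt m\ge 1$), give the claimed inequality. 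I expect the main obstacle to be purely bookkeeping: unlike in Lemma~\ref{lemma:avgdevBnd}, the gradient bound is available only in expectation, so the interchange of expectation with the row-norm and with the sums over $s$ and $j$ must be carried out through Minkowski and Cauchy--Schwarz rather than a pointwise deterministic estimate; a secondary subtlety is disposing cleanly of the regularization term inside $\g^{1/\nu}_j(s)$ while avoiding circularity (since the bound \eqref{eq:localiter:norm} one might invoke was itself derived for the base scheme), which goes through exactly as in the bound for $\E[\norm{\nabla J_i(\bar\w(t))}^2]$.
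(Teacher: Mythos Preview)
Your proposal is correct and follows the same overall skeleton as the paper: unroll the linear recursion, bound the mini-batch subgradient via its Gram matrix, invoke Lemma~\ref{lem:specnormIntdim} (which is exactly where the hypothesis $1/\nu>\tfrac{4}{3\specnorm^2}\log d$ enters), and finish with the mixing-rate bound \eqref{eq:mixingrate} and the series bound \eqref{eq:seriesBnd}.

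The one substantive difference is how the regularization piece $\reg\w_i(t)$ inside $\g^{1/\nu}_i(t)$ is handled. The paper does not leave it in the gradient at all: it absorbs $\eta_t\reg\w_i(t)$ into the linear part of the recursion by passing to a perturbed transition matrix $\tilde{\bP}(t)$ with $\tilde P_{ii}(t)=P_{ii}-\eta_t\reg$, so that the ``gradient'' in the unrolled recursion is purely the loss part and admits the deterministic bound $\norm{\g^{1/\nu}_j(s)}^2\le L^2\nu\,\spec(\Q_{1/\nu})$. The extra triangle inequality $\norm{\tfrac1m-(\tilde\bP^{t-s})_i}_1\le\norm{\tfrac1m-(\bP^{t-s})_i}_1+\norm{(\bP^{t-s})_i-(\tilde\bP^{t-s})_i}_1$ costs a factor~$2$, which is the source of the $4$ in the final constant. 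This sidesteps entirely the circularity you flag (no bound on $\E[\norm{\w_j(s)}^2]$ is needed), at the price of tracking a time-varying perturbed matrix. Your route---keep $\bP$ and bound the regularization contribution separately---also works, but requires either a bootstrap (first run the crude $\norm{\g}\le L$ argument to get a preliminary bound on $\E[\norm{\w_j(s)}^2]$, then refine) or an appeal to the analogue of \eqref{eq:localiter:norm} for the mini-batch scheme, which you correctly note must be established without relying on the present lemma. On the other hand, your Minkowski/Cauchy--Schwarz treatment of the expectation through the sums over $s$ and $j$ is more careful than the paper's, which pulls $\sqrt{\nu\spec(\Q_{1/\nu})}$ outside the sums as though it were a single deterministic constant.
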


\begin{proof}
It is easy to see that we can write the update equation in Algorithm \ref{alg:DiSCO} as
\begin{align}
\w_i(t+1) &= \sum_{j=1}^m\tilde{\bP}_{ij}(t)\w_j(t)- \step_t \g^{1/\nu}_i(t)
\end{align}
where
\begin{align}
\tilde{\bP}_{ij}(t) =
\left\{
	\begin{array}{ll}
		P_{ij}(t)  &  \text{ when } i \ne j \\
		P_{ii}(t)-\frac{1}{mt} &  \text{ when } i=j
	\end{array}
\right.
\end{align}
and $\g_i(t) = \g^{1/\nu}_i(t) + \reg\w_i(t)$.

We need first a bound on $\norm{\g^{1/\nu}_j(s)}$ using the definition of the minibatch (sub)gradient:
 \begin{align}
\norm{\g^{1/\nu}_i(s)}^2 &= \norm{ \frac{\sum_{i_{k_s} \in H^i_s}\partial \ell(\w_i(s)^{\trans}\x_{k_{i_s}})\x_{k_{i_s}} }{1/\nu}}^2  \notag \\
&\leq L^2\nu \norm{\Q_{1/\nu}} 
\end{align} 
From \eqref{eq:devBnd} and the minibatch (sub)gradient bound
	\begin{align*}
   	&
   	\norm{ \bar{\w}(t)-\w_i(t) } \notag \\
	&\le 
	\norm{ \sum_{s=1}^{t-1} \step_s \sum_{j=1}^m \left( \frac{1}{m} - (\tilde{\bP}^{t-s})_{ij} \right) \g^{1/\nu}_j(s)} 
	\notag \\ 
  		&\qquad
		+ \step_t \norm{\left( \sum_{j=1}^{m} \frac{\mbf{1}}{m} \g^{1/\nu}_j(t) - \g^{1/\nu}_i(t) \right) } \\
	&\le L\sqrt{\nu\norm{\Q_{1/\nu}}}\sum_{s=1}^{t-1} \frac{\norm{ \frac{\mbf{1}}{m} - (\tilde{\bP}^{t-s})_{i} }_1}{\reg s} + \frac{2L\sqrt{\nu\norm{\Q_{1/\nu}}}}{\reg t} \\
	&\le L\sqrt{\nu\norm{\Q_{1/\nu}}}
		\notag \\
		&\hspace{0.6in}
		\sum_{s=1}^{t-1} \frac{\norm{ \frac{\mbf{1}}{m} - (\bP^{t-s})_{i} }_1 + \norm{ (\bP^{t-s})_{i} - (\tilde{\bP}^{t-s})_{i} }_1}{\reg s} \notag \\
		&\qquad + \frac{2L\sqrt{\nu\norm{\Q_{1/\nu}}}}{\reg t} \\
	&\le 
	2L\sqrt{\nu\norm{\Q_{1/\nu}}}\sum_{s=1}^{t-1} \frac{\norm{ \frac{\mbf{1}}{m} - (\bP^{t-s})_{i} }_1}{\reg s} + \frac{2L\sqrt{\nu\norm{\Q_{1/\nu}}}}{\reg t} 
	\end{align*}
	
Continuing as in the proof of Lemma \ref{lemma:avgdevBnd}, taking expectations and using Lemma \ref{lem:specnormIntdim}, for 
$1/\nu > \frac{4}{3\specnorm^2} \log(d)$ we have
	\begin{align}
	\sqrt{\E\left [\norm{ \bar{\w}(t)-\w_i(t) }^2 \right ]}
		&\le \frac{4L\sqrt{m\nu\E\left[\norm{\Q^{1/\nu}}\right]}}{\reg}\frac{\log(2bet^2)}{bt} \notag \\
		&\le \frac{4L\sqrt{5m\specnorm^2}}{\reg}\frac{\log(2bet^2)}{bt}
		\label{eq:avgtoind:normbnd:minibatch}
\end{align}

\end{proof}

For the scheme \eqref{eq:mbatchComm} all the steps until bound \eqref{eq:mainBnd} from proof of Theorem \ref{theorem:mainThrmStoch} remain the same. The difference in the rest of the proof arises primarily from the mini batch gradient norm factor in Lemma \ref{lemma:avgdevBndmbatch}. We have the same decomposition as \eqref{eq:mainBnd} with $\text{T1}$, $\text{T2}$, and $\text{T3}$ as in \eqref{eq:T1}, \eqref{eq:T2}, and \eqref{eq:T3}.
 The gradient norm bounds also don't change since the minibatch gradient is also an unbiased gradient of the true gradient $\nabla J(\cdot)$. Thus substituting Lemma \ref{lemma:avgdevBndmbatch} in the above and following the same steps as in proof of Theorem \ref{theorem:mainThrmStoch}, replacing $T$ by $\nu T$ where $T$ is now the total iterations including the communication as well as the minibatch gathering rounds, we get Theorem \ref{theorem:mainThrm:mbatch}.
 
 \subsection{Proof of Lemma \ref{lemma:mainThrmLarge}}

In the proof we will \removed{first show that the iterate of Algorithm \ref{alg:DiSCO} is asymptotically normal by showing it is close to the iterate of distributed algorithm of \cite{BianchiFortHachem:13IEEETrans}  and then} use the corresponding multivariate normality result of Bianchi et al.~\cite[Theorem 5]{BianchiFortHachem:13IEEETrans}. Finally using smoothness and strong convexity we shall get Lemma \ref{lemma:mainThrmLarge}.  

It is easy to verify that Algorithm \ref{alg:DiSCO} satisfies all the assumptions necessary (Assumptions \textbf{1}, \textbf{4}, \textbf{6}, \textbf{7}, \textbf{8a}, and \textbf{8b} in Bianchi et al.~\cite{BianchiFortHachem:13IEEETrans}) for the result to hold.
\begin{itemize}
\item Assumption \textbf{1} requires the weight matrix $\bP(t)$ to be row stochastic almost surely, identically distributed over time, and that $\E[\bP(t)]$ is column stochastic. Our Markov matrix is constant over time and doubly stochastic. Assumption \textbf{1b} follows because $\bP$ is constant and independent of the stochastic gradients, which are sampled uniformly with replacement.
\item Assumption \textbf{4} requires square integrability of the gradients as well as a regularity condition. In our setting, this follows since the sampled gradients are bounded almost everywhere.
\item Assumption \textbf{6} imposes some analytic conditions at the optimum value.  These hold since the gradient is assumed to be differentiable and the Hessian matrix at $\w^{*}$ is positive definite with its smallest eigenvalue is at least $\reg > 0$ (this follows from strong convexity). 
\item Assumption \textbf{7} of Bianchi et al.~\cite{BianchiFortHachem:13IEEETrans} follows from our existing assumptions.
\item Assumptions \textbf{8a} and \textbf{8b} are standard stochastic approximation assumptions on the step size that are easily satisfied by $\step_t = \frac{1}{\reg t}$.
\end{itemize} 

Next it is straightforward to show that the average over the nodes of the iterates $\tilde{\w}_i(t)$, $\w_i(t)$ for Algorithm \ref{alg:DiSCO} and distributed algorithm of ~\cite{BianchiFortHachem:13IEEETrans} are the same and satisfy
\begin{align}
\bar{\tilde{\w}}(t+1) = \bar{\tilde{\w}}(t) - \eta_t \frac{\sum_{i=1}^m \g_i(t)}{m}  \notag \\ 
\bar{\w}_i(t+1) = \bar{\w}_i(t+1) - \eta_t \frac{\sum_{i=1}^m \g_i(t)}{m}  
\label{eq:avgIterates}
\end{align}

Now note that 
\begin{align}
\w_i(t) - \w^{*} = \underbrace{\w_i(t) - \bar{\w}_i(t)}_{\textbf{T1=Network Error}} + \underbrace{\bar{\w}_i(t) - \w^{*}}_{\textbf{T2=Asymptotically Normal}}
\label{eq:NetworkErrAsymNormal}
\end{align}

From Lemma \ref{lemma:avgdevBnd} we know that the network error (\textbf{T1}) decays and from update equation \eqref{eq:avgIterates} we know that the averaged iterates for both the versions are the same . Then the proof of Theorem $5$ of Bianchi et al.~\cite{BianchiFortHachem:13IEEETrans} shows that the term \textbf{T2}, under the above assumptions when appropriately normalized converges to a centered Gaussian distribution. Equation \eqref{eq:NetworkErrAsymNormal} then implies
\begin{align}
\sqrt{\reg t}\left(\w_i(t) - \w^{*}\right) \sim \mc{N}\left(\mbf{0} , \mbf{H}\right ),
\label{eq:multinormDist}
\end{align}
where $\mbf{H}$ is the solution to the equation
\begin{align}
\nabla J^2(\w^{*}) \mbf{H} + \mbf{H} \nabla J^2(\w^{*})^{T} = \mbf{C}.
\end{align}
Let $\Y \sim \mc{N}(\mbf{0},\mc{\mbf{I}})$, so we can always write for any $\mbf{X} \sim \mc{N}(\mbf{0},\mbf{\mbf{H}})$
\begin{align}
\mbf{X} = \Y \mbf{H}^{1/2},
\end{align}
and thus
\begin{align}
\norm{\mbf{X}}^2 = \Y^{\trans}\mbf{H}\Y
\end{align}
Then it is well known that $\norm{\mbf{X}}^2 \sim \chi^2(\Tr(\mbf{H}))$ and so $\E\left[ \norm{\mbf{X}}^2 \right] = \Tr(\mbf{H})$.

Let us now consider the suboptimality at the iterate $\sum_{j=1}^m P_{ij} \w_j(t)$. It is easy to see that for a differentiable and strongly convex function
\begin{align}
J\left(\sum_{j=1}^m P_{ij} \w_j(t)\right) - J(\w^{*}) &\leq \frac{G}{2} \norm{\sum_{j=1}^m P_{ij} \w_j(t) - \w^{*}}^2.
\label{eq:suboptAsym}
\end{align}

Now it is easy to see from \eqref{eq:multinormDist} that for a node $j \in \mc{N}(i)$
\begin{align}
P_{ij}\sqrt{\reg t}\left(\w_j(t) - \w^{*}\right) \sim \mc{N}\left(\mbf{0} , (P_{ij})^2 \mbf{H}\right ).
\end{align}
This implies that
\begin{align}
\sum_{j \in \mc{N}(i)} P_{ij}\sqrt{\reg t}\left(\w_j(t) - \w^{*}\right) \sim \mc{N}\left(\mbf{0} ,  \left(\sum_{j \in \mc{N}(i)}(P_{ij})^2\right) \mbf{H} \right ).
\label{eq:asmypDist}
\end{align}
Then taking expectation w.r.t to the distribution \eqref{eq:asmypDist} and using standard properties of norms of multivariate normal variables,
\begin{align}
&\E\left[\norm{\sum_{j \in \mc{N}(i)} P_{ij}\sqrt{\reg t}\left(\w_j(t) - \w^{*}\right)}^2\right]  \notag \\
&=\left(\sum_{j \in \mc{N}(i)}(P_{ij})^2\right) \Tr\left(\mbf{H}\right).
\end{align}

Then substituting in bound \eqref{eq:suboptAsym} and taking the limit we finally get
\begin{align}
&\limsup\limits_{T\rightarrow \infty} T\cdot \E\left[J\left(\sum_{j=1}^m P_{ij} \w_j(T)\right) - J(\w^{*})\right] \notag \\
&\le \sum_{j \in \mc{N}(i)}(P_{ij})^2 \cdot \Tr\left(\mbf{H}\right) \cdot \frac{G}{\reg}.
\end{align}

\subsection{Proof of Theorem \ref{theorem:mainThrmLargeApplication}}

The the covariance of the gradient noise under the sampling with replacement model is
\begin{align}
\mathbf{C} &= \E\left[\g_i(t)\g_i(t)^{\trans}\right] - \nabla J(\w_i(t)) \nabla J(\w_i(t))^{\trans} \notag \\
  &= \frac{\sum_{i=1}^N \beta_{i,t}\x_i\x_i^{T}}{N} + \frac{\mu}{N}\sum_{i=1}^N \beta_{i,t} \left(\x_i\w_i(t)^{\trans} +  \w_i(t)\x_i^{\trans}\right) \notag \\
  &+ \mu^2 \w_i(t)\w_i(t)^{\trans} - \nabla J(\w_i(t)) \nabla J(\w_i(t))^{\trans}. \notag \\
\end{align}

Thus we can bound the spectral norm of $\mathbf{C}$ as
\begin{align}
\spec(\mathbf{C}) &\leq L^2 \rho^2 + 2\mu L \E\left[\norm{\w_i(t)}\right] + \mu^2 \E\left[\norm{\w_i(t)}^2\right] \notag \\
                 &+ \E\left[\norm{\nabla J(\w_i(t))}^2\right].
\end{align}

Now from bound \eqref{eq:localiter:norm} since $T \rightarrow \infty$ we have 
\begin{align*}
&\E\left[\norm{\w_i(t)}^2\right] \le \frac{10L^2\specnorm^2}{\reg^2} \notag \\ 
&\E\left[ \norm{\nabla J_i(\w_i(t))}^2 \right] \le 30L^2\specnorm^2.
\end{align*}

Putting everything together we get
\begin{align}
\spec(\mathbf{C}) \leq 50\rho L^2.
\label{eq:noiseCov}
\end{align}

Next note that $H = C\left(\nabla^2 J(\w^{*})\right)^{-1}/2$.  From the completeness and uniform weight assumptions on the graph, we have
\begin{align}
\sum_{j \in \mc{N}(i)}(P_{ij})^2 = \frac{1}{m}.
\end{align}

Thus substituting in Lemma \ref{lemma:mainThrmLarge}, using \eqref{eq:noiseCov} gives us 
\begin{align}
&\limsup\limits_{t\rightarrow \infty} t \cdot \E\left[J\left(\sum_{j=1}^m P_{ij} \w_j(t)\right) - J(\w^{*})\right] 
\notag \\
&\le \frac{1}{m} \cdot \frac{\Tr\left(\left(\mbf{C} \nabla^2 J(\w^{*})\right)^{-1} \right)}{2} \cdot \frac{G}{\reg} \notag \\
&\le \frac{25\rho L^2}{m} \cdot \Tr\left(\nabla^2 J(\w^{*})^{-1} \right) \cdot \frac{G}{\reg}. \notag
\label{eq:finalbndlarge}
\end{align}

\bibliographystyle{IEEEtran}
\bibliography{opt}

\end{document}